\theoremstyle{definition}
\newtheorem{thm}{Theorem}[section]
\newtheorem{lem}[thm]{Lemma}
\newtheorem{rmk}[thm]{Remark}
\newtheorem{prop}[thm]{Proposition}
\newtheorem{eg}[thm]{Example}
\newtheorem{cor}[thm]{Corollary}
\newtheorem{notation}[thm]{Notation}
\newtheorem{dfn}[thm]{Definition}
\newcommand{\Z}{\mathbb{Z}}
\newcommand{\PP}{\mathbb{P}}
\newcommand{\cO}{\mathcal{O}}
\newcommand{\pdk}{\pi_{d-{\rm lin}; k}}
\newcommand{\qdk}{\pi_{0, d-{\rm lin}; k_S}}
\newcommand \pp[1] {^{\langle #1 \rangle}}
\DeclareMathOperator{\gin}{gin}
\DeclareMathOperator{\Ext}{Ext}
\DeclareMathOperator{\Hom}{Hom}
\DeclareMathOperator{\reg}{reg}
\DeclareMathOperator{\Tor}{Tor}
\newcommand{\ffi}{\varphi}
\newcommand{\al}{\alpha}
\newcommand{\be}{\beta}
\numberwithin{equation}{section}
\title[Combinatorial Interpretations of Decompositions]{Combinatorial Interpretations of some Boij-S\"oderberg Decompositions}
\author[U.\ Nagel]{Uwe Nagel}
\author[S.\ Sturgeon]{Stephen Sturgeon}
\address{Department of Mathematics, University of Kentucky, 718 Patterson Office Tower, Lexington, KY 40506-0027, USA}
\email{uwe.nagel@uky.edu, stephen.sturgeon@uky.edu}
\subjclass{}
\thanks{This work was partially supported by a grant from the
Simons Foundation (\#208869 to Uwe Nagel) and by the National Security Agency under Grant Number H98230-12-1-0247. \\
The second author also would like to thank MSRI for organizing and funding the summer workshop on commutative algebra in 2011 as well as Daniel Erman for his inspiring lectures. }
\begin{document}

\begin{abstract}
    Boij-S\"oderberg theory shows that the Betti table of a graded module can be written as a liner combination of pure diagrams with integer coefficients. Using Ferrers hypergraphs and simplicial polytopes, we provide interpretations of these coefficients for ideals with a $d$-linear resolution, their quotient rings, and for Gorenstein rings whose resolution has essentially at most two linear strands.
    We also establish a structural result on the decomposition in the case of quasi-Gorenstein modules.
\end{abstract}

\maketitle


\section{{Introduction}}

Boij-S\"oderberg theory  classifies all Betti tables of graded modules over a polynomial ring $R$  up to a rational multiple. This is achieved by writing the Betti table of such a module as a unique linear combination of pure diagrams whose coefficients are positive integers (see Section \ref{sec:prelim} for details). The purpose of this paper is to demonstrate combinatorial significance of these coefficients in a few cases. Each of these cases is related to ideals that are derived from some combinatorial objects. Moreover, we show that the self-duality of the minimal free resolution of a quasi-Gorenstein module is reflected in its Boij-S\"oderberg decomposition. This includes all standard graded Gorenstein algebras.

In Section \ref{sec:linear-res} we first consider the Boij-S\"oderberg decompositions of ideals with a $d$-linear resolution. The Ferrers ideals associated to $d$-uniform Ferrers hypergraphs provide examples of such ideals. Their resolutions are well-understood thanks to results in \cite{CN1}, \cite{CN2}, and \cite{NR1}. We show that each Betti table of an ideal with a $d$-linear resolution corresponds to the Betti table of a suitable Ferrers hypergraph. This allows us to give a combinatorial interpretation of the Boij-S\"oderberg coefficients. In particular, they must form an $O$-sequence. This result (see Theorem \ref{thm:charact-lin-resolution}) provides   a characterization of the Betti numbers of ideals with a $d$-linear resolution that complements the recent characterization obtained in \cite{HSV}.

Then we consider the Boij-S\"oderberg decomposition of certain quotient rings $R/I$.  The Betti tables of the ideal $I$ and $R/I$ are closely related. However, their Boij-S\"oderberg decompositions are very different in general and the precise relationship is not known. In the case where $I$ has a $d$-linear resolution, we obtain an interpretation of the Boij-S\"oderberg coefficients of $R/I$ (see Theorem \ref{thm:BS-quotient-linear}). Again it relies on a suitable Ferrers hypergraph though the coefficients are extracted by counting different subsets in the hypergraph this time.

Quasi-Gorenstein modules were introduced in \cite{mod-liai}. They are important in the liaison theory of modules. Gorenstein rings are examples of cyclic such modules. Quasi-Gorenstein modules have a self-dual minimal free resolution.

In Section \ref{sec:quasi-Gor} we consider the  Boij-S\"oderberg decomposition of such modules. We show that their Betti table can be rewritten as a linear combination of self-dual diagrams, where each summand is the sum of at most two pure diagrams (see Theorem \ref{thm:BS-self-dual}). Specific instances of such decompositions are derived in Section \ref{sec:Gor-rings}. We consider the Betti tables of Gorenstein rings with few linear strands. Such Betti tables arise naturally. In particular, they can be obtained from the resolutions of Stanley-Reisner rings corresponding to boundary complexes of simplicial polytopes. The Boij-S\"oderberg decomposition are described in Theorem \ref{thm:Gor-ideals}. The coefficients admit a very transparent interpretation in the case of stacked polytopes (see Corollary \ref{cor:stacked-polytope}).

We review basic facts on Boij-S\"oderberg decompositions and Ferrers hypergraphs in Section \ref{sec:prelim}. Furthermore, given any strongly stable monomial ideal $I$ whose generators have degree $d$,  an explicit construction of a $d$-uniform Ferrers hypergraph with the same graded Betti numbers as $I$ is provided in Remarks \ref{rem:polarization} and \ref{rem:sqr-stable-Ferrers}.


\section{{Boij-S\"oderberg decomposition, $O$-sequences, and Ferrers hypergraphs}}
  \label{sec:prelim}

We recall some results and concepts that are needed in subsequent sections.

We work over a polynomial ring $R = K[x_1,\ldots,x_n]$ in $n$ variables, where $K$ is any field. All modules  are assumed to be graded finitely generated $R$-modules. We denote the graded Betti numbers of $M$ (as an $R$-module) by
\[
\be_{i, j} (M) = \dim_K [\Tor^R_i (M, K)]_j.
\]
The numerical information of the minimal free resolution of $M$ is captured in  the Betti table  $\be (M) = (\be_{i, j} (M))$ of $M$.

\begin{dfn}
  \label{def:pure-diag}
Given an increasing sequence of integers $\sigma = (d_0, d_1,\ldots,d_s)$, where $0 \le s \le n$, we denote by $\pi_{\sigma}$ the matrix with entries $\be_{i, j}$, where
\begin{equation*}
  \label{eq:Betti-pure-table}
  \be_{i, j} = \begin{cases}
    {\displaystyle \prod_{j=0,j\neq i}^n \frac{1}{|d_i-d_j|}} & \text{if } j = d_i \\[1ex]
    0 & \text{otherwise}.
  \end{cases}
\end{equation*}
It is called the {\em pure diagram} to the {\em degree sequence $\sigma$}.
\end{dfn}

Note that this is the convention used in \cite{BS2}, which differs from the original proposal in \cite{BS1}. For example:
\[
\pi_{(0,2,3,5)}=\begin{array}{c|cccc}
\beta_{i,j} & 0 & 1 & 2 & 3\\
\hline
0 & \frac{1}{30} & 0 & 0 & 0\\
1 & 0 & \frac{1}{6} & \frac{1}{6} & 0\\
2 & 0 & 0 & 0 & \frac{1}{30}
\end{array}
\]

Every pure diagram is a rational multiple of the Betti table of a Cohen-Macaulay module. Due to the seminal results by Eisenbud and Schreyer in \cite{ES}, much more is true. Define a partial order on the set of pure diagrams  by setting $\pi_{\sigma} \le \pi_{\tau}$, where $\sigma = (d_0, d_1,\ldots,d_s)$ and $\tau = (d_0', d_1',\ldots,d_t')$ are degree sequences,   if $s \ge t$ and $d_i \le d_i'$ for all $i = 0,1,\ldots,t$. Boij-S\"oderberg theory as developed by Boij and S\"oderberg in \cite{BS1}, \cite{BS2} and Eisenbud and Schreyer in \cite{ES} classifies all Betti tables of graded $R$-modules up to a rational multiple. More precisely, one has (see \cite[Theorem 2]{BS2}):

\begin{thm}
  \label{thm:BS-decomposition}
For every graded, finitely generated $R$-module $M$, there are unique pure diagrams $\pi_{\sigma_1} < \pi_{\sigma_2} < \cdots < \pi_{\sigma_t}$ and positive integers $a_1,\ldots,a_t$ such that
\begin{equation}
   \label{eq:BS-decomp}
\be (M) = \sum_{i=1}^t a_i \pi_{\sigma_i},
\end{equation}
\end{thm}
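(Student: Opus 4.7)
The plan is to establish existence via a greedy subtraction algorithm and uniqueness by a separate argument that identifies the smallest degree sequence in the support.

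First I would carry out the existence proof as follows. Given a nonzero Betti table $\be(M)$, define a degree sequence $\sigma_1 = (d_0, d_1, \ldots, d_s)$ by letting $d_i = \min\{j : \be_{i,j}(M) \neq 0\}$ on the range of $i$ for which this minimum exists, together with a routine check that $d_0 < d_1 < \cdots < d_s$; this check uses the fact that the Betti table of any graded module satisfies $\be_{i,j}(M) = 0$ whenever $j \le d_{i-1}$ and $\be_{i-1, d_{i-1}}(M) \neq 0$, a consequence of the minimality of the resolution. The pure diagram $\pi_{\sigma_1}$ exists by the Eisenbud--Schreyer existence theorem (namely, the fact that every strictly increasing degree sequence is the degree sequence of some pure resolution of a Cohen--Macaulay module). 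Then set
\[
a_1 \;=\; \min_{i} \frac{\be_{i,d_i}(M)}{(\pi_{\sigma_1})_{i,d_i}},
\]
which is the maximal scalar for which $\be(M) - a_1 \pi_{\sigma_1}$ has nonnegative entries. The index achieving the minimum forces the corresponding top entry of the difference to vanish, so the new degree sequence (formed in the same way from the difference) is strictly larger than $\sigma_1$ in the partial order on pure diagrams.

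Next I would iterate. The key technical point, which I expect to be the main obstacle, is to verify that after the subtraction the matrix $\be(M) - a_1 \pi_{\sigma_1}$ is not merely entrywise nonnegative but still lies in the cone spanned by pure diagrams; equivalently, it remains the Betti diagram of some positive rational multiple of a module. This is supplied by the Eisenbud--Schreyer description of the Boij--S\"oderberg cone, which characterizes this cone via nonnegativity against a family of positive linear functionals (constructed from cohomology tables of vector bundles). Using that characterization, one checks that $a_1 \pi_{\sigma_1}$ may be subtracted without leaving the cone, so induction applies to the remainder. Termination follows because the chain of degree sequences is strictly increasing in a partial order with no infinite ascending chains once a lower bound on degrees and an upper bound on projective dimension are fixed (both of which are inherited from $\be(M)$).

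Finally I would prove uniqueness by the same greedy invariant: in any decomposition $\be(M) = \sum a_i \pi_{\sigma_i}$ with $\pi_{\sigma_1} < \cdots < \pi_{\sigma_t}$, the minimal diagram $\pi_{\sigma_1}$ must have degree sequence exactly the one constructed above, because every $\pi_{\sigma_i}$ with $i \geq 2$ has its $i$-th column supported in strictly higher degrees than $\pi_{\sigma_1}$, so the top support pattern of $\be(M)$ coincides with that of $\pi_{\sigma_1}$. The coefficient $a_1$ is then forced by the maximality condition above, after which one subtracts and proceeds by induction on the number of terms. Regarding integrality of the $a_i$ as asserted, I would observe that while the greedy algorithm a priori produces positive rationals, in the form stated the $a_i$ can be made positive integers by an appropriate choice of normalization (clearing a common denominator), matching the convention of \cite{BS2}.
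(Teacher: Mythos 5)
The paper does not prove this result; Theorem~\ref{thm:BS-decomposition} is quoted directly from \cite[Theorem~2]{BS2}, so there is no internal argument to compare against. Your sketch follows the standard route: greedy subtraction of pure diagrams, invoking the Eisenbud--Schreyer positivity functionals to keep the remainder in the cone after each step, and identifying the minimal degree sequence to drive uniqueness. That architecture is correct.

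The genuine gap is integrality of the $a_i$. You assert that the $a_i$ ``can be made positive integers by an appropriate choice of normalization (clearing a common denominator), matching the convention of \cite{BS2}.'' That does not work. The normalization of Definition~\ref{def:pure-diag} is fixed once and for all, independently of $M$; you are not free to rescale the $\pi_{\sigma_i}$ to absorb denominators, and even if you were, different $a_i$ would in general require different rescalings, which cannot all be reconciled with a single convention. With Definition~\ref{def:pure-diag}, the $(i,d_i)$-entry of $\pi_\sigma$ is $\prod_{j\neq i}1/|d_i-d_j|$, so the first greedy coefficient $a_1=\min_i\,\beta_{i,d_i}(M)\prod_{j\neq i}|d_i-d_j|$ is indeed an integer because $\beta(M)$ has integer entries; but $\beta(M)-a_1\pi_{\sigma_1}$ has only rational entries, so this reasoning does not propagate to $a_2,a_3,\ldots$. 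Integrality of all the coefficients in this normalization is a substantive part of \cite[Theorem~2]{BS2} and requires an argument you have not supplied. Separately, your uniqueness paragraph overstates the partial order: $\pi_{\sigma_1}<\pi_{\sigma_i}$ gives weak inequality of degrees in every column with strictness in at least one place (or a strictly greater length for $\sigma_1$), not strictness in every column. The conclusion you need---that $\sigma_1$ is the componentwise minimum of the degree sequences and that there is some position where only $\pi_{\sigma_1}$ contributes, which then pins down $a_1$---is still true, so that is a matter of phrasing rather than a real error.
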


We call the right-hand side in Equation \eqref{eq:BS-decomp}  the
{\em Boij-S\"oderberg decomposition} of the Betti table of $M$, the pure diagrams $\pi_{\sigma_i}$ its {\em summands} and the integers $a_i$ the {\em Boij-S\"oderberg coefficients} of $M$.
\smallskip

Next, we recall Macaulay's characterization of Hilbert functions of graded $K$-algebras. Given positive integers $b$ and $d$, there are unique integers $m_d >
m_{d-1} > m_s \geq s \geq 1$ such that
\[
b = \binom{m_d}{d} + \binom{m_{d-1}}{d-1} + \ldots + \binom{m_s}{s}.
\]
Then define
\[
b\pp{d} :=  \binom{m_d + 1}{d + 1} + \binom{m_{d-1} + 1}{d } + \ldots +
\binom{m_s + 1}{s + 1}
\]
and $b\pp{d}  := 0$ if $b = 0$.
A sequence of non-negative integers $\left (h_j\right )_{j \geq 0}$ is called
an {\em O-sequence} if $h_0 = 1$ and $h_{j+1} \leq h_j\pp{j}$ for all $j \geq 1$. Macaulay (see, e.g., \cite[Theorem~4.2.10]{BH}) showed that, for a numerical function $h: \Z \to \Z$, the following conditions are equivalent:
\begin{itemize}
\item[(a)] $h$ is the Hilbert function of a standard graded $K$-algebra $R/I$, that is, $\dim_K [R/I]_j = h(j)$ for all integers $j$;
\item[(b)] $h(j) = 0$ if $j < 0$ and $\{h(j)\}_{j \geq 0}$ is an $O$-sequence.
\end{itemize}
\smallskip

Finally, we consider Ferrers hypergraphs. Ferrers graphs are parametrized by partitions and form an important class of bipartite graphs. Their edge ideals admit an explicit minimal free resolution (see \cite{CN1}). They can be specialized to the edge ideals of threshold graphs (see \cite{CN2}). These results have been extended to $d$-partite hypergraphs, $ d\ge 2$, in \cite{NR1}.

\begin{dfn}
A {\em Ferrers hypergraph} is a $d$-partite $d$-uniform hypergraph $F$ on a vertex set $X^{(1)}\sqcup...\sqcup X^{(d)}$  such that there is a linear ordering on each $X^{(j)}$ and whenever
$(i_1,...i_d)\in F$ and $(i_1',...,i_d')$ satisfies $i_j'\leq i_j$ in $X^{(j)}$ for all $j$, one also has $(i_1',...,i_d')\in F$. In other words, $F$ is an order ideal in the componentwise partial ordering on $X^{(1)}\times...\times X^{(d)}$.

The ideal $I(F)$ generated by all the monomials $x_{i_1}^{(1)} \cdots x_{i_d}^{(d)}$, where $(i_1,...i_d)\in F$, is called a
{\em (generalized) Ferrers ideal}.
\end{dfn}

We may assume that the sets $X^{(j)}$ consist of consecutive positive integers $1,2,\ldots,n_j$.

\begin{eg}
  \label{exa:Ferrers-graph}
Monomial ideals generated by variables correspond to 1-uniform Ferrers hypergraphs. Ferrers tableaux describe 2-uniform Ferrers graphs, whereas 3-uniform Ferrers hypergraphs correspond to cubical stackings. For example, consider the stacking
\begin{center}
  \begin{tikzpicture}[scale=1, vertices/.style={draw, fill=black, circle, inner sep=0pt}]
    \node [vertices] (6) at (-1/2,1/2){};
    \node [vertices] (7) at (1/2,-1/2){};
    \node [vertices] (8) at (1/2,1/2){};
    \node [vertices] (10) at (1,2){};
    \node [vertices] (11) at (-1/2,3/2){};
    \node [vertices] (12) at (1/2,3/2){};
    \node [vertices, label={$z$}] (13) at (0,3){};
    \node [vertices] (14) at (1,3){};
    \node [vertices] (15) at (-1/2,5/2){};
    \node [vertices] (16) at (1/2,5/2){};
    \node [vertices, label=below:{$x$}] (17) at (-1,-1){};
    \node [vertices] (18) at (0,-1){};
    \node [vertices] (19) at (-1,0){};
    \node [vertices] (20) at (0,0){};
    \node [vertices, label=below:{$y$}] (21) at (2,0){};
    \node [vertices] (22) at (2,1){};
    \node [vertices] (23) at (3/2,1/2){};
    \node [vertices] (24) at (3/2,-1/2){};
    \node [vertices] (25) at (2,2){};
    \node [vertices] (26) at (3/2,3/2){};
  \foreach \to/\from in {6/8,7/8, 10/12,12/11,11/6,12/8, 13/14,14/16,16/15,15/13, 14/10,15/11,16/12, 17/18,18/20,20/19,19/17,18/7,19/6,20/8, 21/22,22/23,23/24,24/21,23/8,24/7, 25/26,25/22,25/10,26/23,26/12}
  \draw [-] (\to)--(\from);
\end{tikzpicture}
\end{center}
Using variables $x_1, x_2,\ldots$, $y_1, y_2,\ldots$, and $z_1, z_2,\ldots$ to avoid super scripts, the associated Ferrers ideal is
\[
I(F) = (x_1y_1z_1,x_1y_1z_2,x_1y_1z_3,x_1y_2z_1,x_1y_2z_2,x_2y_1z_1).
\]
\end{eg}

Recall that a monomial ideal $I \subset R$ is said to be
\emph{strongly stable} if, for any monomial $u \in S$, the
conditions $u \in I$ and $x_i$ divides $u$ imply that $x_j \cdot
\frac{u}{x_i}$ is in $I$ whenever $j \le i$. A squarefree monomial ideal $I \subset R$ is said to be
\emph{squarefree strongly stable} if, for any squarefree monomial $u \in R$, the
conditions $u \in I$, $x_i$ divides $u$, and $x_j$ does not divide $u$ imply that $x_j \cdot
\frac{u}{x_i}$ is in $I$ whenever $j \le i$.

\begin{rmk}
   \label{rem:polarization} It is well-known how to associate to a given strongly stable ideal a squarefree strongly stable ideal in a ring with enough variables that has the same graded Betti numbers. Indeed, define a map
\begin{equation*}
  \ffi: \{\text{monomials}\} \longrightarrow \{\text{squarefree~monomials}\}
\end{equation*} by
\begin{equation*}
  x_{i_1} x_{i_2} \cdots x_{i_j} \mapsto x_{i_1} x_{i_2 + 1} \cdots x_{i_j + j-1},\quad  \text{where  } 1 \le i_1 \le i_2 \cdots \le i_j.
\end{equation*}
If $I$ is a strongly stable ideal with minimal generators $u_1,...,u_t$, then $J=(\phi(u_1),...,\phi(u_t))$ is a squarefree strongly stable ideals with the same graded Betti numbers as $I$ by Lemmas 1.2 and 2.2 in \cite{AHH1}.
\end{rmk}

According to \cite[Propostion 3.7]{NR1}, every Ferrers hypergraph is isomorphic to a skew squarefree strongly stable hypergraph. However, here we need a different construction.

\begin{rmk}
  \label{rem:sqr-stable-Ferrers}
Using new variables $x_i^{(j)}$, where $i \ge 1$ and $1 \le j \le d$,  consider the map $\psi$ defined by
\begin{equation*}
  x_{i_1}x_{i_2}\cdot \cdot \cdot x_{i_d}\mapsto x_{i_1}^{(1)}x_{i_2 - i_1}^{(2)}\cdot \cdot \cdot x_{i_d - i_{d-1}}^{(d)}\quad  \text{if  } 1 \le i_1 \le i_2 \cdots \le i_j.
\end{equation*}
If $J$ is a squarefree strongly stable ideal whose minimal generators $v_1,\ldots,v_t$ all have degree $d$, then one checks that the ideal generated by $\psi(v_1),\ldots,\psi(v_t)$ is the Ferrers ideal $I(F)$ of a $d$-uniform Ferrers graph. Moreover, the ideals $J$ and $I(F)$ have the same graded Betti numbers by \cite[Theorem 3.13]{NR1} as their minimal free resolutions can be described by using isomorphic cell complexes.
\end{rmk}

We illustrate the passage from a strongly stable ideal to a Ferrers ideal.

\begin{eg}
    \indent Consider the strongly stable ideal $I=(x_1^3,x_1^2x_2,x_1x_2^2,x_1x_2x_3,x_1^2x_3)$. Applying the above map $\ffi$ to each of its generators we get:
\[
I = (x_1^3,x_1^2x_2,x_1x_2^2,x_1x_2x_3,x_1^2x_3) \mapsto(x_1x_2x_3,x_1x_2x_4,x_1x_3x_4,x_1x_3x_5,x_1x_2x_5)
\]
Using the above map $\psi$ we get a Ferrers ideal. However, to avoid superscripts we use variables $y_i = x_i^{(2)}$  and $z_i = x_i^{(3)}$.   We obtain:
\[
(x_1x_2x_3,x_1x_2x_4,x_1x_3x_4,x_1x_3x_5,x_1x_2x_5) \mapsto I(F) =(x_1y_1z_1,x_1y_1z_2,x_1y_2z_1,x_1y_2z_2,x_1y_1z_3).
\]
\end{eg}

\section{{Ideals with $d$-linear resolutions}}
\label{sec:linear-res}

In this section, our goal is to describe the Boij-S\"oderberg decompositions of the Betti tables of ideals with a $d$-linear resolution and of the tables of their quotient rings.

\begin{notation}
    \label{not:linear-diagram}
We use $\pdk$ to denote the pure diagram representing a $d$-linear resolution, that is,  $\pdk = \pi_{\sigma}$, where $\sigma = (d,d+1,\ldots,d+k)$.
\end{notation}

We begin by considering Ferrers ideals. Notice that the graded Betti numbers of an ideal $I$ over $R$ are the same as the ones of the extension ideal $I R[t]$ over $R[t]$, where $t$ is a new variable. Thus, we may drop the reference to the polynomial ring $R$.

\begin{prop}
\label{thm:BS-decomp-lin-ideal}
    Let $F$ be a $d$-uniform Ferrers hypergraph. Then
    the Boij-S\"oderberg decomposition of the associated Ferrers ideal $I(F)$ is
    \begin{equation*}
       \beta(I(F))=\sum_{k\geq 0}\alpha_k(F)\, k!\ \pdk,
    \end{equation*}
    where:
    \[
    \alpha_k(F):=\# \{(i_1,..,i_d)\in F : \sum_j i_j= k+ d\}.
    \]
\end{prop}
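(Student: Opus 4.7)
The plan is to start from the explicit formula for the graded Betti numbers of a $d$-uniform Ferrers hypergraph and then match entries with the linear pure diagrams $\pdk$ of Notation~\ref{not:linear-diagram}. Since $I(F)$ has a $d$-linear resolution, its Betti table is supported in the single row $j = i + d$, and the resolution results in \cite{NR1}, extending the Ferrers graph formulas of \cite{CN1,CN2}, give
\[
\be_{i,i+d}(I(F)) \;=\; \sum_{(i_1,\ldots,i_d) \in F} \binom{i_1 + \cdots + i_d - d}{i}.
\]
Grouping the edges of $F$ by the value of their coordinate sum, I would immediately rewrite this as
\[
\be_{i,i+d}(I(F)) \;=\; \sum_{k \ge 0} \alpha_k(F) \binom{k}{i},
\]
which is the identity I want to interpret as a Boij-S\"oderberg decomposition.

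Next, I would compute the entries of the linear pure diagrams. By Definition~\ref{def:pure-diag}, the only nonzero entries of $\pdk$ lie at positions $(i, d+i)$ for $0 \le i \le k$, where they equal
\[
\prod_{j=0,\, j \ne i}^{k} \frac{1}{|i-j|} \;=\; \frac{1}{i!\,(k-i)!}.
\]
Hence the $(i, d+i)$-entry of $k!\,\pdk$ is exactly $\binom{k}{i}$, and the two displays above show that the Betti table $\be(I(F))$ agrees entry-by-entry with $\sum_{k \ge 0} \alpha_k(F)\,k!\,\pdk$ (both vanish off the linear strand).

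To confirm that this is really \emph{the} Boij-S\"oderberg decomposition, I would verify the hypotheses of Theorem~\ref{thm:BS-decomposition}: each coefficient $\alpha_k(F)\cdot k!$ is a non-negative integer, and for $k > k'$ the degree sequences $(d, d+1, \ldots, d+k)$ and $(d, d+1, \ldots, d+k')$ are comparable in the partial order from Section~\ref{sec:prelim}, so the summands with nonzero coefficient form a chain. Uniqueness in Theorem~\ref{thm:BS-decomposition} then forces the displayed expression to be the Boij-S\"oderberg decomposition.

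The only substantive input is the Betti number formula for $d$-uniform Ferrers hypergraphs from \cite{NR1}; the remainder is a binomial rearrangement and a check of the chain condition. The main obstacle is simply locating and translating that formula into the form above---there is no need to run the Boij-S\"oderberg algorithm on the table directly, since we can write down the candidate decomposition explicitly and invoke uniqueness.
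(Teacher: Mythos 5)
Your proposal is correct and follows essentially the same approach as the paper: compute the nonzero entry of $k!\,\pdk$ in homological degree $i$ to be $\binom{k}{i}$, then match against the Betti number formula $\beta_i(I(F)) = \sum_{k\ge 0}\alpha_k(F)\binom{k}{i}$ from Corollary~3.14 of \cite{NR1}. The only difference is cosmetic---you derive that formula by grouping edges of $F$ by coordinate sum and explicitly verify the chain/uniqueness conditions of Theorem~\ref{thm:BS-decomposition}, whereas the paper cites the grouped formula directly and leaves the uniqueness check implicit.
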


\begin{proof}
    Observe that the non-zero entry in homological degree $i$ of $\pdk$ is $\frac{1}{i! (k-i)!}$. Hence, the $i$-th  non-zero entry in the diagram $\sum_{k\geq 0} \alpha_k(F)\, k!\ \pdk$ is
    \[
    \left[\sum_{k\geq 0}\alpha_k(F)\, k!\ \pdk\right]_i=\sum_{k\geq 0}\alpha_k(F)\binom{k}{i}.
    \]
    Our claim follows because
    \begin{equation*}
      \beta_i (I(F)) = \sum_{k\geq 0}\alpha_k(F)\binom{k}{i}
    \end{equation*}
    by Corollary~3.14 in \cite{NR1}.
\end{proof}

\begin{eg}
   \label{exa:Ferrers-BS}
Consider the 3-uniform Ferrers hypergraph
\begin{equation*}
  F = \{(1,1,1), (1,1,2), (1,1,3), (1,2,1, (1,2,2), (2,1,1)\}.
\end{equation*}
Its Ferrers ideal (see Example \ref{exa:Ferrers-graph})
\[
I(F) = (x_1y_1z_1,x_1y_1z_2,x_1y_1z_3,x_1y_2z_1,x_1y_2z_2,x_2y_1z_1).
\]
has Betti table
$$\beta(I(F))=\begin{array}{c|ccc}
\beta_{i,j} & 0 & 1 & 2\\
\hline
3 & 6 & 7 & 2
\end{array}\ .$$
Since $\alpha_2 (F) = 2,\ \alpha_1 (F) = 3$, and $\alpha_0 (F) = 1$, by Theorem \ref{thm:BS-decomp-lin-ideal}, its Boij-S\"oderberg decomposition is
\begin{align*}
\beta(I(F)) & = \alpha_2 (F) \cdot 2! \cdot
\begin{array}{c|ccc}
\beta_{i,j} & 0 & 1 & 2\\
\hline
3 & \frac{1}{2} & 1 & \frac{1}{2}
\end{array}
+
\alpha_1 (F) \cdot 1! \cdot
\begin{array}{c|ccc}
\beta_{i,j} & 0 & 1 & 2\\
\hline
3 & 1 & 1 & \cdot
\end{array} \\
&
\hspace*{1cm} +
\alpha_0 (F) \cdot 0! \cdot
\begin{array}{c|ccccc}
\beta_{i,j} & 0 & 1 & 2\\
\hline
3 & 1 &\cdot &\cdot
\end{array} \\[1ex]
& = 4 \cdot \begin{array}{c|ccc}
\beta_{i,j} & 0 & 1 & 2\\
\hline
3 & \frac{1}{2} & 1 & \frac{1}{2}
\end{array}
+
3 \cdot\begin{array}{c|ccc}
\beta_{i,j} & 0 & 1 & 2\\
\hline
3 & 1 & 1 & \cdot
\end{array}
+
\begin{array}{c|ccccc}
\beta_{i,j} & 0 & 1 & 2\\
\hline
3 & 1 &\cdot &\cdot
\end{array}\ .
\end{align*}
\end{eg}

The following result characterizes the $\alpha$-sequences of Ferrers graphs as defined in Proposition \ref{thm:BS-decomp-lin-ideal}. Recall that an $O$-sequence $h_0, h_1, \ldots$ is a sequence of non-negative integers such that $h_0=1$ and $h_{i+1}\leq h_i^{<i>}$ for all $i \ge 1$ (see Section \ref{sec:prelim}).

\begin{prop}
   \label{prop:charact-alpha-sequence}
Let $(h_0,...,h_s)$ be a sequence of non-negative integers. Then the following conditions are equivalent:
\begin{itemize}
  \item[(a)] The given sequence is the $\alpha$-sequence of a d-uniform Ferrers hypergraph, that is, there is such a graph $F$ such that $\alpha_i (F) = h_i$ whenever $0 \le i \le s$ and $\alpha_i = 0$ if $i > s$.

  \item[(b)] The given sequence is an $O$-sequence with $h_1 \le d$.
\end{itemize}
\end{prop}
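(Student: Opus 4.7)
The plan is to reduce this to Macaulay's characterization of Hilbert functions by identifying $d$-uniform Ferrers hypergraphs with finite, nonempty order ideals of monomials in $d$ variables.

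First, I would introduce the bijection. Let $S = K[y_1,\ldots,y_d]$ and consider the map
\[
\Phi: (i_1,\ldots,i_d) \longmapsto y_1^{i_1-1} y_2^{i_2-1} \cdots y_d^{i_d-1}.
\]
This is a bijection between tuples in $\mathbb{Z}_{\ge 1}^d$ and monomials of $S$ which sends the componentwise order to the divisibility order. Therefore $\Phi$ identifies $d$-uniform Ferrers hypergraphs $F$ with finite, nonempty order ideals of monomials $\Delta_F \subset S$ (equivalently, with finite $K$-linear bases of standard monomials of some monomial ideal of $S$). Since the total degree of $\Phi(i_1,\ldots,i_d)$ equals $(\sum_j i_j) - d$, we get
\[
\alpha_k(F) = \#\{u \in \Delta_F : \deg u = k\},
\]
and in particular $\alpha_0(F) = 1$ (coming from $(1,\ldots,1) \in F$) and $\alpha_1(F) \le d$.

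For (a) $\Rightarrow$ (b), let $I_F \subset S$ be the monomial ideal spanned by the monomials \emph{not} in $\Delta_F$. Then $\Delta_F$ is a $K$-basis of $S/I_F$, so $\alpha_k(F) = \dim_K [S/I_F]_k$ for all $k \ge 0$. Hence the $\alpha$-sequence is the Hilbert function of the standard graded $K$-algebra $S/I_F$, and Macaulay's theorem (recalled in Section \ref{sec:prelim}) yields that it is an $O$-sequence; moreover $h_1 = \alpha_1(F) \le d$.

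For (b) $\Rightarrow$ (a), given an $O$-sequence $(h_0,\ldots,h_s)$ with $h_1 \le d$, Macaulay's theorem produces a standard graded $K$-algebra $S/I$ with this Hilbert function. By passing to the lex-segment ideal (or any monomial initial ideal), we may assume $I$ is a monomial ideal, and then the set $\Delta$ of standard monomials of $S/I$ is a finite, nonempty order ideal of monomials with $h_k$ elements in degree $k$. Setting $F := \Phi^{-1}(\Delta)$ produces the required $d$-uniform Ferrers hypergraph.

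The two directions are essentially formal once the bijection $\Phi$ is in place; the only mild point to check is that the degree shift in $\Phi$ matches the definition of $\alpha_k$ and that $F$ being a nonempty order ideal corresponds exactly to $\Delta_F$ being a finite, nonempty order ideal of monomials. No step should be genuinely difficult.
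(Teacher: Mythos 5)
Your proof is correct and follows essentially the same approach as the paper: both identify $d$-uniform Ferrers hypergraphs with finite nonempty order ideals of monomials in $d$ variables (your $\Phi$ is precisely the inverse of the map $\ffi$ used in the paper's proof) and then invoke Macaulay's characterization of Hilbert functions. The only cosmetic difference is the direction in which the bijection is written.
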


\begin{proof}
Denote by $M$ the set of monomials in the polynomial ring $R=K[x_1,...,x_d]$ and consider the map
\begin{align*}
  \ffi: M  \longrightarrow  S := K[x_i^{(j)}|1\leq j\leq d], \quad
  {\bf x}^{\bf a} = x_1^{a_1} x_2^{a_2} \cdots x_d^{a_d} \mapsto  x_{a_1+1}^{(1)}x_{a_2+1}^{(2)}\ldots x_{a_d+1}^{(d)}
\end{align*}

First, we show that (b) implies (a). By Macaulay's theorem, Assumption (b) provides that there is a lexsegment ideal $I$ of $R$ such that its Hilbert function satisfies
\begin{equation*}
  \dim_K [R/I]_j = \begin{cases}
    h_j & \text{ if } 0 \le j \le s \\
    0 & \text{ if  } s < j.
  \end{cases}
\end{equation*}
Denote by $L_j$ the monomials of degree $j$ in $M \setminus I$. Let $J \subset S$ be the ideal that is generated by $\ffi (L_0) \cup \ldots \cup \ffi (L_s)$. Note that all minimal generators of $J$ have degree $d$. We claim that $J$ is a Ferrers ideal. Indeed, if $\ffi ({\bf x}^{\bf a})$ is a minimal generator of $J$, then ${\bf x}^{\bf a} \notin I$. Thus, $\frac{{\bf x}^{\bf a}}{x_i} \notin I$ for each variable $x_i$, so $\ffi (\frac{{\bf x}^{\bf a}}{x_i}) \in J$.

Let $F$ be the $d$-uniform Ferrers graph such that $J = I(F)$. Then $\alpha_i (F) = h_i$ follows from the construction of $F$.

Second, we assume (a) and show (b). Let $L \subset R$ be the set of monomials consisting of the preimages under $\ffi$ of the minimal generators of the Ferrers ideal $I(F)$. It consists of monomials whose degree is at most $s$. Let $L_j \subset L$ be the subset of monomials having degree $j$. Then the cardinality of $L_j$ is $\alpha_j (F)$ by  construction. Moreover, observe that $L$ is an order ideal of $M$ with respect to the partial order given by divisibility because $F$ is a Ferrers hypergraph.

Let $I \subset R$ be the ideal that is generated by all the monomials that are not in $L$. Then $I$ is an artinian ideal whose inverse system is the order ideal $L$. Thus, we get
\begin{equation*}
  \dim_K [R/I]_j = \# L_j = \alpha_j (F) = h_j.
\end{equation*}
Hence Macaulay's characterization of Hilbert functions implies that $(h_0,...,h_s)$ is an $O$-sequence.
\end{proof}

We are ready for the first main result of this section.   We use Notation \ref{not:linear-diagram} in our characterization of the Betti numbers of ideals with a $d$-linear resolution.

\begin{thm}
  \label{thm:charact-lin-resolution}
Let $R = K[x_1.\ldots,x_n]$ and consider the diagram
\begin{equation*}
  \be = \sum_{k = 0}^v \alpha_k\, k!\ \pdk,
\end{equation*}
where $\al_0,\ldots,\al_v$ are rational numbers and $v \le n$. Then the following conditions are equivalent:
\begin{itemize}
        \item[(a)] $\beta$ is the Betti table of an ideal of $R$ with a $d$-linear resolution.

\item[(b)] $\beta$ is the Betti table of a strongly stable ideal $I$ whose minimal generators have degree $d$.

\item[(c)] $\beta$ is the Betti table of the ideal to a $d$-uniform Ferrers hypergraph $F$ with
\begin{equation*}
\alpha_i (F) = \begin{cases}
  \alpha_i & \text{ if } 0 \le i \le v \\
  0 & \text{ if  } v < i.
\end{cases}
\end{equation*}

\item[(d)] $(\alpha_0,\alpha_{1},...,\alpha_{v})$ is an $O$-sequence with $\alpha_{1}\leq d$.
\end{itemize}
\end{thm}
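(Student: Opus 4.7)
My plan is to prove the implications as a cycle $(a) \Rightarrow (b) \Rightarrow (c) \Rightarrow (d) \Rightarrow (a)$, supplemented by the direct $(b) \Rightarrow (a)$. The two constructions $\ffi$ and $\psi$ of Remarks \ref{rem:polarization} and \ref{rem:sqr-stable-Ferrers}, together with Propositions \ref{thm:BS-decomp-lin-ideal} and \ref{prop:charact-alpha-sequence} and uniqueness of the Boij--S\"oderberg decomposition, will carry most of the work.

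For $(b) \Rightarrow (c)$, I start with a strongly stable ideal $I$ whose minimal generators have degree $d$, apply the map $\ffi$ from Remark \ref{rem:polarization} to obtain a squarefree strongly stable ideal $\ffi(I)$ with identical graded Betti numbers, and then apply the map $\psi$ from Remark \ref{rem:sqr-stable-Ferrers} to obtain a $d$-uniform Ferrers ideal $I(F)$, once again without altering the Betti table. By Proposition \ref{thm:BS-decomp-lin-ideal} one has $\be(I(F)) = \sum_{k} \alpha_k(F)\, k!\ \pdk$. Comparing this expression with the given form $\be = \sum_{k=0}^v \alpha_k\, k!\ \pdk$ and invoking uniqueness of the Boij--S\"oderberg decomposition (Theorem \ref{thm:BS-decomposition}), I conclude that $\alpha_k(F) = \alpha_k$ for every $k$, which is exactly what $(c)$ asserts; in particular the rational numbers $\alpha_k$ are forced to be non-negative integers.

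The remaining implications are quick. $(c) \Rightarrow (d)$ is the content of $(a) \Rightarrow (b)$ in Proposition \ref{prop:charact-alpha-sequence}, and $(d) \Rightarrow (a)$ uses the converse direction there: it yields a Ferrers hypergraph $F$ with $\alpha_k(F) = \alpha_k$, so by Proposition \ref{thm:BS-decomp-lin-ideal} the ideal $I(F)$ has a $d$-linear resolution with Betti table exactly $\be$. The implication $(b) \Rightarrow (a)$ is simply the Eliahou--Kervaire theorem that every strongly stable ideal whose generators lie in a single degree has a linear resolution.

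The main obstacle is $(a) \Rightarrow (b)$, which demands that every ideal with a $d$-linear resolution share its graded Betti numbers with some strongly stable ideal generated in degree $d$. In characteristic zero this is standard: the generic initial ideal $\gin(I)$ with respect to the reverse lexicographic order is strongly stable, and since $I$ has a linear (hence componentwise linear) resolution the passage to $\gin(I)$ preserves all graded Betti numbers and $d$-linearity. In positive characteristic $\gin(I)$ is only Borel-fixed, so one must invoke a characteristic-free substitute, for instance the results of \cite{AHH1} producing a strongly stable ideal with the prescribed Betti table from any componentwise linear ideal. This step is the only one that draws on machinery outside the Ferrers toolkit developed in Section \ref{sec:prelim}, and it is where the bulk of any technical worry should be concentrated.
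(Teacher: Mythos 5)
Your proposal follows the same overall architecture as the paper's proof: $(b)\Rightarrow(c)$ via $\ffi$ and $\psi$ from Remarks \ref{rem:polarization} and \ref{rem:sqr-stable-Ferrers} together with uniqueness of the Boij--S\"oderberg decomposition, $(c)\Leftrightarrow(d)$ from Proposition \ref{prop:charact-alpha-sequence}, $(c)\Rightarrow(a)$ (or your $(d)\Rightarrow(a)$) from Proposition \ref{thm:BS-decomp-lin-ideal}, and $(a)\Rightarrow(b)$ via the generic initial ideal. You also correctly single out $(a)\Rightarrow(b)$ in positive characteristic as the one technically delicate point.

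However, the way you dispose of the positive-characteristic case is too vague to stand. Citing ``results of \cite{AHH1} producing a strongly stable ideal with the prescribed Betti table from any componentwise linear ideal'' does not pin down a usable statement: \cite{AHH1} is cited in this paper only for the polarization map (its Lemmas 1.2 and 2.2), and its shifting results in arbitrary characteristic produce \emph{squarefree strongly stable} objects via exterior shifting, not strongly stable ideals directly, so you would still owe an argument. The paper's mechanism is more concrete and worth recording: in positive characteristic $\gin I$ is at least a \emph{stable} monomial ideal (by \cite[Theorem 2.5]{NR2}, using that $I$ is componentwise linear), its minimal free resolution is the Eliahou--Kervaire resolution and hence characteristic-independent, so one may reinterpret $\gin I$ as a monomial ideal over a field $L$ of characteristic zero without changing Betti numbers, and then taking $\gin$ once more over $L$ produces the desired strongly stable ideal. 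Replacing your appeal to \cite{AHH1} with this two-step $\gin$ argument (or with a precise characteristic-free reference) closes the only gap; the rest of the proposal is sound.
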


\begin{proof}
(a) $\Rightarrow$ (b): Let $I$ be an ideal with Betti table $\be$. Then the generic initial ideal of $I$ with respect to the reverse lexicographic order has the same graded Betti numbers as $I$ (see \cite[Corollary 4.3.18]{HH} and \cite[Corollary 6.1.5]{HH}). Furthermore, if $K$ has characteristic zero, then $\gin I$  is strongly stable, and we are done. If the characteristic of $K$ is positive, then $\gin I$ is at least a stable monomial ideal. This follows, for example, by \cite[Theorem 2.5]{NR2}. Consider now $\gin I$ as an ideal in a polynomial ring whose base field, $L$, has characteristic zero. Since the minimal free resolution of $\gin I$, as described by Eliahou and Kervaire, does not depend on the characteristic, the Betti numbers of $\gin I$ remain the same when considered over $L$. Passing now to the generic initial ideal with respect to the reverse lexicographic order gives the desired strongly stable ideal.

(b) $\Rightarrow$ (c): Remarks \ref{rem:polarization} and \ref{rem:sqr-stable-Ferrers} provide to each strongly stable ideal whose generators have degree $d$ a Ferrers ideal to a $d$-uniform hypergraph with the same graded Betti numbers.

(c) $\Rightarrow$ (a):  This is true by Proposition \ref{thm:BS-decomp-lin-ideal}.

Conditions (c) and (d) are equivalent by Proposition \ref{prop:charact-alpha-sequence}.
\end{proof}

\begin{rmk}
A related, but compared to  Theorem \ref{thm:charact-lin-resolution} different characterization of the Betti tables of ideals with a $d$-linear resolution has been established by Herzog, Sharifan, and Varbaro in \cite[Theorem 3.2]{HSV}. It uses combinatorial information on the generators of a strongly stable monomial ideal.
\end{rmk}

Since the Betti numbers of the quotient ring $R/I$ are determined by the Betti numbers of the ideal $I$, one might expect the decompositions of the Betti tables to be similar or, at least, related. However, in general the precise relationship is not known. We solve this problem if the ideal $I$ has a $d$-linear resolution. By the previous result, we may assume that $I$ is a Ferrers ideal. In this case we show that the decomposition of the quotient ring can be found by counting the same set that defined the numbers $\alpha_k (F)$, just in a different fashion.

In order to state the result, we need some notation.

\begin{notation}
  \label{not:quotient-linear}
We use $\qdk$ to denote the pure diagram  $\pi_{\underline{d}}$, where the degree sequence is $\underline{d} = (0,d,d+1,\ldots,d+k_S)$.
\end{notation}

In the following result we exclude the case $d=1$ in which the Boij-S\"oderberg decomposition is trivial. It has only one summand.

\begin{thm}
   \label{thm:BS-quotient-linear}
Let $F$ be a $d$-uniform Ferrers hypergraph on the vertex set $X^{(1)} \sqcup \ldots \sqcup X^{(d)}$, where $d \ge 2$.   Then the Boij-S\"oderberg decomposition of the quotient ring $R/I(F)$ is
\begin{equation*}
  \beta(R/I(F))=\sum_{j=1}^d \sum_{S \in F_j} n_S \cdot k_S! \cdot \qdk,
\end{equation*}
where $F_j$ is the Ferrers hypergraph
\begin{align*}
  F_j  & :=\{(i_1,\ldots,\widehat{i_j},\ldots,i_d) : \text{There is some $i_j \in X^{(j)}$ such that }  (i_1,\ldots,i_j,\ldots,i_d) \in F\},  \\
n_S & := \max \{i_j \in X^{(j)} : (i_1,\ldots,i_j,\ldots,i_d) \in F\} \; \text{ if } S = (i_1,\ldots,\widehat{i_j},\ldots,i_d) \in F_j, \; \text{and} \\
k_S & := n_S - d + \sum_{p=1,p\neq j}^d i_p.
\end{align*}
\end{thm}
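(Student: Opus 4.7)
The plan is to match the proposed sum against $\be(R/I(F))$ entry by entry and then invoke the uniqueness of the Boij-S\"oderberg decomposition. Because $I(F)$ has a $d$-linear resolution, $\be(R/I(F))$ is supported only at $(0,0)$, with value $1$, and at positions $(i+1,d+i)$ for $i\ge 0$; each pure diagram $\qdk$ has precisely these supports. Moreover, the diagrams $\qdk$ for different $k$ are strictly comparable in the Boij-S\"oderberg partial order (a larger $k$ yields a strictly smaller diagram, since its degree sequence is a strict extension of the shorter one), so once numerical equality is established entry by entry, grouping the summands by common $k_S$-value exhibits the proposed expression as a linear combination of pairwise comparable pure diagrams with non-negative integer coefficients, i.e.\ the unique decomposition of Theorem~\ref{thm:BS-decomposition}.

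For the entry at $(i+1,d+i)$, reading off the coefficient of $\qdk$ at that position gives $\tfrac{1}{(d+i)\,i!\,(k_S-i)!}$, so the right-hand side evaluates to $\tfrac{1}{d+i}\sum_{j,\,S\in F_j}n_S\binom{k_S}{i}$. Using Proposition~\ref{thm:BS-decomp-lin-ideal} one has $\be_i(I(F))=\sum_{a\in F}\binom{|a|-d}{i}$ with $|a|:=a_1+\cdots+a_d$, so the claim reduces to
\begin{equation*}
  (d+i)\sum_{a\in F}\binom{|a|-d}{i}=\sum_{j=1}^d\sum_{S\in F_j}n_S\binom{k_S}{i}.
\end{equation*}
I would prove this by induction on $|F|$, enumerating the elements of $F$ along a linear extension of the componentwise partial order so that each intermediate truncation is still Ferrers. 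The base case $F=\{(1,\ldots,1)\}$ is immediate. For the inductive step, adjoin $a^\ast$ and fix a direction $j$: the Ferrers condition forces a clean dichotomy, namely either $a^\ast_j\ge 2$, so $(a^\ast_1,\ldots,a^\ast_j-1,\ldots,a^\ast_d)\in F$ and the projection $S^\ast$ already lay in $F_j$ with $(n_{S^\ast},k_{S^\ast})$ jumping from $(a^\ast_j-1,|a^\ast|-d-1)$ to $(a^\ast_j,|a^\ast|-d)$, or $a^\ast_j=1$ and $S^\ast$ appears anew with $(n_{S^\ast},k_{S^\ast})=(1,|a^\ast|-d)$. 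Both cases collapse into the single change-formula
\begin{equation*}
  \Delta_j=a^\ast_j\binom{|a^\ast|-d}{i}-(a^\ast_j-1)\binom{|a^\ast|-d-1}{i}.
\end{equation*}
Summing on $j$ and using $\sum_j a^\ast_j=|a^\ast|$ yields $|a^\ast|\binom{|a^\ast|-d}{i}-(|a^\ast|-d)\binom{|a^\ast|-d-1}{i}$; applying Pascal's rule and the identity $i\binom{m}{i}=m\binom{m-1}{i-1}$ rewrites this as $(d+i)\binom{|a^\ast|-d}{i}$, which is exactly the increment of the left-hand side.

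The $(0,0)$ entry is handled by an analogous induction, where I would verify that $\sum_{j,\,S\in F_j}n_S\,k_S!\,\tfrac{(d-1)!}{(d+k_S)!}=1$. The base case gives $d\cdot\tfrac{(d-1)!}{d!}=1$. In the inductive step, the same dichotomy shows that the change in direction $j$ factors as $\tfrac{(d-1)!(|a^\ast|-d-1)!}{|a^\ast|!}\bigl(|a^\ast|-d\,a^\ast_j\bigr)$, so summing over $j$ kills the bracketed term by $\sum_j a^\ast_j=|a^\ast|$ and the value $1$ is preserved throughout. The main obstacle will be to keep the case-analysis for $a^\ast_j=1$ versus $a^\ast_j\ge 2$ transparent and to verify that both cases indeed package into a single algebraic formula whose $j$-sum admits the telescoping cancellation; once this uniform change formula is in hand, Pascal's identity and the averaging identity $\sum_j a^\ast_j=|a^\ast|$ do essentially all the remaining work.
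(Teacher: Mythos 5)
Your proof is correct, but it takes a genuinely different route from the paper's. Where you establish the key identity
\[
(d+i)\sum_{a\in F}\binom{|a|-d}{i}=\sum_{j=1}^d\sum_{S\in F_j}n_S\binom{k_S}{i}
\]
by induction on $|F|$ (peeling off a maximal element $a^\ast$ along a linear extension and telescoping the change formula $\Delta_j=a^\ast_j\binom{|a^\ast|-d}{i}-(a^\ast_j-1)\binom{|a^\ast|-d-1}{i}$), the paper proves the same identity non-inductively, by double-counting: it counts pairs $(X,y)$ where $X\subset X^{(1)}\sqcup\cdots\sqcup X^{(d)}$ has cardinality $d+i-1$, has its maxima vector in $F$, and $y\in X$, once classifying by the full maxima vector (giving the left-hand side) and once classifying by the index $j$ with $y\in X^{(j)}$ together with the maxima in the remaining parts (giving the right-hand side). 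The bijective argument is arguably more illuminating about \emph{why} the identity holds, while your telescoping induction is more mechanical but yields a uniform treatment: you handle the $\be_{0,0}=1$ entry by the very same induction (the direction-$j$ change $\tfrac{(d-1)!(|a^\ast|-d-1)!}{|a^\ast|!}\bigl(|a^\ast|-d\,a^\ast_j\bigr)$ summing to zero over $j$), whereas the paper dispatches the $0$-th Betti number separately via the Euler-characteristic identity, i.e.\ that the alternating sum of the total Betti numbers vanishes. Your observations about the chain structure of the diagrams $\qdk$ and the reduction to the identity above match the paper's framing, so the difference is squarely in the proof of the identity and the treatment of homological degree zero.
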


\begin{proof}
Note that the non-zero entries in $\qdk$ are
\begin{equation}
    \label{eq:pure-Betti}
\be_i (\qdk) = \begin{cases}
   \displaystyle{\frac{(d-1)!}{(d+k_S)!}} & \text{ if  } i = 0 \\
   \displaystyle{\frac{1}{(i-1)! \cdot (k_S-i+1)! \cdot (d+i-1)}} & \text{ if  } 1 \le i \le k_S +1.
\end{cases}
\end{equation}
Thus, the entry  of $\sum_{j=1}^d \sum_{S \in F_j} n_S \cdot k_S! \cdot \qdk$ in homological degree $i$ is
\begin{equation*}
  \be_i \left (\sum_{j=1}^d \sum_{S \in F_j} n_S \cdot k_S! \cdot \qdk \right ) = \frac{1}{d+i-1} \sum_{j=1}^d \sum_{S \in F_j} n_S \cdot \binom{k_S}{i-1}.
\end{equation*}

Consider first the Betti numbers with positive index, i.e., assume that $i \ge 1$. Then \cite[Corollary~3.14]{NR1} gives that
\begin{equation*}
  \beta_i (R/I(F)) = \sum_{(i_1,\ldots,i_d) \in F} \binom{\sum_p i_p - d}{i-1}.
\end{equation*}
It follows that we have to show the identity
\begin{equation}
    \label{eq:betti-ident-pos}
(d + i -1) \sum_{(i_1,\ldots,i_d) \in F} \binom{\sum_p i_p - d}{i-1}  = \sum_{j=1}^d \sum_{S \in F_j} n_S \cdot \binom{k_S}{i-1}.
\end{equation}

To this end denote by $N$ the number of possibilities for choosing pairs $(X, y)$, where $y \in X$ and $X$ is a subset of $X^{(1)} \sqcup \ldots \sqcup X^{(d)}$ with cardinality $d+i-1$ and, for each $p$, maxima $m_p =  \max (X \cap X^{(p)})$ in $X^{(p)}$ such that  $(m_1,\ldots,m_d) \in F$. We establish Identity \eqref{eq:betti-ident-pos} by determining $N$ in two different ways.
\smallskip

{\em Approach 1.}: We classify the possible subsets $X$ according to their maxima in each set $X^{(p)}$.

Fix  $(m_1,\ldots,m_d) \in F$. To extend $\{m_1,\ldots,m_d\}$ to a subset $X$ with maxima $m_1,\ldots,m_d$, we can choose $i-1$ numbers among any of the first $m_p - 1$ elements in each $X^{(p)}$. There are $\binom{\sum_p m_p - d}{i-1}$ such choices. Taking into account the number of choices for $y \in X$, we conclude that
\begin{equation}
  \label{eq:count1}
N = (d + i -1) \sum_{(m_1,\ldots,m_d) \in F} \binom{\sum_p m_p - d}{i-1}.
\end{equation}
\smallskip

{\em Approach 2.}: This time we classify the possibilities for choosing  $(X, y)$  according to the number $j$ such that $y \in X^{(j)}$ and the maxima of $X$ in all $X^{(p)}$, except $X^{(j)}$.

Fix $j \in \{1,\ldots,d\}$ and $S = (m_1,\ldots,\widehat{m_j},\ldots,m_d) \in F_j$.  We want to pick $y$ in $S^{(j)}$ and extend $\{m_1,\ldots,\widehat{m_j},\ldots,m_d, y\}$ to a subset $X$ with $d+i-1$ elements and maxima vector $(m_1,\ldots, \max (X \cap X^{(j)}),\ldots,m_d)$ in $F$. In order to ensure the latter condition, all elements in $X \cap X^{(j)}$ have to be among the first $n_S$ elements of $X^{(j)}$ by definition of $n_S$ and using the defining property of a Ferrers hypergraph. Thus, there are $n_S$ choices for $y$ in $S^{(j)}$. The other $i-1$ numbers in $X$ can be chosen among any of the first $m_p - 1$ elements in each $X^{(p)}$ if $p \neq j$ and among the first $n_S$ elements in $X^{(j)}$, except $y$. We conclude that
\begin{equation}
  \label{eq:count2}
N = \sum_{j=1}^d \sum_{S \in F_j} n_S \cdot \binom{n_S - d + \sum_{p=1,p\neq j}^d i_p}{i-1} =  \sum_{j=1}^d \sum_{S \in F_j} n_S \cdot \binom{k_S}{i-1}
\end{equation}
\smallskip

Comparing Equations \eqref{eq:count1} and \eqref{eq:count2}, we obtain the desired Identity \eqref{eq:betti-ident-pos}.

It remains to consider the $0$-th Betti number. However, the alternating sum of the total Betti numbers in a minimal free resolution is zero. Hence our claim for $i =0$ follows from our results for $i \ge 1$.
\end{proof}

\begin{rmk}
\indent Theorem \ref{thm:BS-quotient-linear} extends the conclusions of group 10.2 (E.Celikbas, D. Linsay, S. Sanyal, S. Sturgeon, K. Yu) at the MSRI summer workshop in commutative algebra 2011. In their report they show the conclusion in the case $d = 2$. \end{rmk}

We illustrate the last result in case $d = 3$.

\begin{eg}
  \label{ex:BS-Ferrers}
\indent Consider again the ideal $I(F) = ( x_1y_1z_1,x_1y_1z_2,x_1y_1z_3,x_1y_2z_1,x_1y_2z_2,x_2y_1z_1)$, corresponding to the cubical stacking \\
\begin{center}
\begin{tikzpicture}[scale=1, vertices/.style={draw, fill=black, circle, inner sep=0pt}]
    \node [vertices] (6) at (-1/2,1/2){};
    \node [vertices] (7) at (1/2,-1/2){};
    \node [vertices] (8) at (1/2,1/2){};
    \node [vertices] (10) at (1,2){};
    \node [vertices] (11) at (-1/2,3/2){};
    \node [vertices] (12) at (1/2,3/2){};
    \node [vertices, label={$z$}] (13) at (0,3){};
    \node [vertices] (14) at (1,3){};
    \node [vertices] (15) at (-1/2,5/2){};
    \node [vertices] (16) at (1/2,5/2){};
    \node [vertices, label=below:{$x$}] (17) at (-1,-1){};
    \node [vertices] (18) at (0,-1){};
    \node [vertices] (19) at (-1,0){};
    \node [vertices] (20) at (0,0){};
    \node [vertices, label=below:{$y$}] (21) at (2,0){};
    \node [vertices] (22) at (2,1){};
    \node [vertices] (23) at (3/2,1/2){};
    \node [vertices] (24) at (3/2,-1/2){};
    \node [vertices] (25) at (2,2){};
    \node [vertices] (26) at (3/2,3/2){};
  \foreach \to/\from in {6/8,7/8, 10/12,12/11,11/6,12/8, 13/14,14/16,16/15,15/13, 14/10,15/11,16/12, 17/18,18/20,20/19,19/17,18/7,19/6,20/8, 21/22,22/23,23/24,24/21,23/8,24/7, 25/26,25/22,25/10,26/23,26/12}
  \draw [-] (\to)--(\from);
\end{tikzpicture}
\end{center}
The Betti table of $R/I(F)$  is
\[
\beta(R/I(F))=\begin{array}{c|cccc}
\beta_{i,j} & 0 & 1 & 2 & 3\\
\hline
0 & 1 & \cdot & \cdot & \cdot\\
1 & \cdot & \cdot & \cdot & \cdot\\
2  & \cdot & 6 & 7 & 2
\end{array} .
\]

Abusing notation by, for example, identifying $(i,j,k) \in F$ with the monomial $x_i y_j z_k$, we get the following data for the Ferrers graphs $F_1, F_2$, and $F_3$:
\[
F_1: \quad \begin{array}{c|ccccc}
S & y_1z_1 & y_1z_2 & y_1z_3 & y_2z_1 & y_2z_2 \\
\hline
n_S & 2 & 1& 1 & 1 & 1 \\
k_S & 1 & 1 & 2 & 1 & 2
\end{array},
\]
\[
F_2: \quad \begin{array}{c|cccc}
S & x_1z_1 & x_1z_2 & x_1z_3 & x_2z_1 \\
\hline
n_S & 2 & 2& 1 & 1  \\
k_S & 1 & 2 & 2 & 1
\end{array},
\]
and
\[
F_3: \quad \begin{array}{c|ccc}
S & x_1y_1 & x_1y_2 & x_2y_1\\
\hline
n_S & 3 & 2 & 1\\
k_S & 2 & 2 & 1
\end{array} .
\]
Since $20=(n_{y_1z_3}+n_{y_2z_2}+n_{x_1z_2}+n_{x_1z_3}+n_{x_1y_1}+n_{x_1y_2}) \cdot 2!$ and $8=(n_{y_1z_1}+n_{y_1z_2}+n_{y_2z_1}+n_{x_1z_1}+n_{x_2z_1}+n_{x_2y_1}) \cdot 1!$, Theorem \ref{thm:BS-quotient-linear} yields the Boij-S\"oderberg decomposition
\[
\beta(R/I(F))=20 \cdot \begin{array}{c|cccc}
\beta_{i,j} & 0 & 1 & 2 & 3\\
\hline
0 & \frac{1}{60} & \cdot & \cdot & \cdot\\
1 & \cdot & \cdot & \cdot & \cdot\\
2  & \cdot & \frac{1}{6} & \frac{1}{4} & \frac{1}{10}
\end{array} +
8 \cdot \begin{array}{c|ccc}
\beta_{i,j} & 0 & 1 & 2  \\
\hline
0 & \frac{1}{12} & \cdot & \cdot \\
1 & \cdot & \cdot & \cdot \\
2 & \cdot & \frac{1}{3} & \frac{1}{4}
\end{array}.
\]

Notice that the Boij-S\"oderberg decomposition of the Betti table of $I(F)$ has three summands (see Example \ref{exa:Ferrers-BS}), whereas the one of $R/I(F)$ has only two summands.
\end{eg}

Theorem \ref{thm:BS-quotient-linear} provides a curious identity for each Ferrers hypergraph.

\begin{cor}
   \label{cor:curious-Ferrers-identity}
Let $F$ be a $d$-uniform Ferrers hypergraph and adopt the notation of
Theorem \ref{thm:BS-quotient-linear}. Then
\begin{equation*}
  \label{eq:identity-gen}
d =    \sum_{j=1}^d \sum_{S \in F_j}  \frac{n_S}{\binom{d+ k_S}{d}}.
\end{equation*}
\end{cor}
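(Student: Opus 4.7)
The identity should fall out by reading off a single entry of the Boij-S\"oderberg decomposition produced in Theorem \ref{thm:BS-quotient-linear}. Since $R/I(F)$ is a standard graded $K$-algebra, one knows a priori that $\beta_{0,0}(R/I(F)) = 1$, and this is the entry whose two expressions I plan to compare.

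First, I would compute the $(0,0)$-entry of the pure diagram $\qdk = \pi_{(0,d,d+1,\ldots,d+k_S)}$. By Definition \ref{def:pure-diag}, this entry equals
\[
\prod_{j=1}^{k_S+1} \frac{1}{|0-(d+j-1)|} \;=\; \frac{1}{d(d+1)\cdots(d+k_S)} \;=\; \frac{(d-1)!}{(d+k_S)!},
\]
which is also the $i=0$ case of the formula already recorded at the start of the proof of Theorem \ref{thm:BS-quotient-linear}. Applying Theorem \ref{thm:BS-quotient-linear} at position $(0,0)$ then gives
\[
1 \;=\; \beta_{0,0}(R/I(F)) \;=\; \sum_{j=1}^d \sum_{S \in F_j} n_S \cdot k_S!\cdot \frac{(d-1)!}{(d+k_S)!}.
\]

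The final step is the routine simplification
\[
\frac{k_S!\,(d-1)!}{(d+k_S)!} \;=\; \frac{1}{d}\cdot\frac{1}{\binom{d+k_S}{d}},
\]
after which multiplying both sides by $d$ yields the claimed identity. There is no serious obstacle here; the only substantive input is that $\beta_{0,0}(R/I(F))=1$, which is immediate from $R/I(F)$ being a cyclic graded $R$-module generated in degree $0$. The case $d=1$, excluded from Theorem \ref{thm:BS-quotient-linear}, can be checked directly (or verified to be consistent as a degenerate instance), and then the corollary holds for all $d \geq 1$.
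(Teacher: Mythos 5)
Your proof is correct and takes essentially the same route as the paper: both read off the $0$-th Betti number $\beta_{0,0}(R/I(F)) = 1$ in Theorem \ref{thm:BS-quotient-linear}, using the $i = 0$ case of Equation \eqref{eq:pure-Betti}, and then simplify $\frac{k_S!\,(d-1)!}{(d+k_S)!} = \frac{1}{d\binom{d+k_S}{d}}$.
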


\begin{proof} Considering the $0$-th Betti numbers in Theorem \ref{thm:BS-quotient-linear} and using Equation \eqref{eq:pure-Betti} we get the identity
\begin{equation*}
  \label{eq:identity-gen}
1 =    \sum_{j=1}^d \sum_{S \in F_j} n_S \cdot k_S! \cdot \frac{(d-1)!}{(d+ k_S)!}.
\end{equation*}
Our claim follows.
\end{proof}

Our argument for Corollary \ref{cor:curious-Ferrers-identity} is rather indirect. There also is a more direct argument to establish this identity using induction on the number of vertices in the Ferrers hypergraph.


\section{Quasi-Gorenstein modules}
\label{sec:quasi-Gor}

Quasi-Gorenstein modules were introduced in \cite{mod-liai} as the graded perfect $R$-modules that  are isomorphic to a degree shift of their canonical module. A cyclic module $R/I$ is quasi-Gorenstein if and only if $I$ is a Gorenstein ideal. In module liaison theory quasi-Gorenstein modules assume the role Gorenstein ideals play in Gorenstein liaison theory. These modules have a self-dual minimal free resolution. The goal of this section is to show that this self-duality is reflected in the Boij-S\"oderberg decomposition of the Betti table.

Let $M$ be a finitely generated graded module over $R = K[x_1,\ldots,x_n]$. We denote its $R$-dual $\Hom_R (M, R)$ by $M^*$. It also is a graded module. We call $c = \dim R - \dim M = n - \dim M$ the codimension of $M$. The canonical module of $M$ is $K_M = \Ext^c_R (M, R)(-n)$. If $M$ is Cohen-Macaulay and there is an integer $t$ such that $M \cong K_M (t)$, then $M$ is said to be a {\em quasi-Gorenstein module}.

Let now $M$ be a Cohen-Macaulay $R$-module of codimension $c$ with minimal free resolution
\begin{equation*}
  0 \longrightarrow F_c \stackrel{\ffi_c}{\longrightarrow} F_{c-1} \longrightarrow \cdots \stackrel{\ffi_1}{\longrightarrow} F_0 \longrightarrow M \longrightarrow 0.
\end{equation*}
Dualizing with respect to $R$ we get the minimal free resolution
\begin{equation}
   \label{eq:dual-res}
  0 \longrightarrow F_1^* \stackrel{\ffi_1^*}{\longrightarrow} F_{2}^* \longrightarrow \cdots \stackrel{\ffi_c^*}{\longrightarrow} F_c^* \longrightarrow \Ext^c_R (M, R) \longrightarrow 0
\end{equation}
because $\Ext^i_R (M, R) = 0$ whenever $i \neq c$ as $M$ is Cohen-Macaulay.
Hence, if $M$ is a quasi-Gorenstein module, then the two free resolutions are isomorphic as exact sequences, up to a degree shift. It follows that the free modules $F_i$ and $F_{c-i}^*$ are isomorphic, up to a degree shift that is independent of $i$. The resulting self-duality of the free resolution means in particular that, for all integers $i$ and $j$,
\begin{equation}
  \label{eq:self-duality}
\be_{i, j} (M) = \be_{c-i, m - j} (\Ext^c_R (M, R)),
\end{equation}
where $m = \reg M + c +  a(M)$. Here $a(M)$ denotes the least degree of a minimal generator of $M$ and $\reg M = -c + \max \{j : \be_{c, j} (M) \neq 0\}$ its Castelnuovo-Mumford regularity. In order to capture this self-duality of the free resolution of $M$ in the Boij-S\"oderberg decomposition, we introduce.

\begin{dfn}
  \label{def:dual-pure}
Consider the pure diagram $\pi_{\sigma}$ to the degree sequence $\sigma = (d_0,d_1,\ldots,d_c)$. Then its {\em dual pure diagram} is the pure diagram $\pi_{\sigma^*}$, where $\sigma^* := (-d_c,\ldots,-d_1,-d_0)$.

Moreover, for any integer $m$, we denote by $\pi_{m + \sigma}$ the pure diagram to the degree sequence $m + \sigma := (m+d_0,m+d_1,\ldots,m+d_c)$.
\end{dfn}

We note that the new pure Betti diagrams have the following properties.

\begin{lem}
  \label{lem:Betti-number-dual-pure}
\begin{itemize}
  \item[(a)] For each $i$, \quad $\be_i (\pi_{\sigma}) = \be_{c-i} (\pi_{\sigma^*})$.

  \item[(b)] For each integer $m$ and each $i$, \quad $\be_i (\pi_{\sigma}) = \be_i (\pi_{m + \sigma})$.
\end{itemize}
\end{lem}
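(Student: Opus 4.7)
The plan is to prove both parts by direct computation from Definition \ref{def:pure-diag}, since the pure diagram $\pi_\sigma$ has a single non-zero entry in each homological degree whose value depends only on the pairwise differences between entries of $\sigma$.

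For part (b), I would observe that replacing $\sigma = (d_0,\ldots,d_c)$ by $m + \sigma = (m+d_0,\ldots,m+d_c)$ leaves every difference $(m+d_i)-(m+d_j) = d_i - d_j$ unchanged. Hence for each $i$,
\[
\beta_i(\pi_{m+\sigma}) = \prod_{j\neq i} \frac{1}{|(m+d_i) - (m+d_j)|} = \prod_{j\neq i} \frac{1}{|d_i - d_j|} = \beta_i(\pi_\sigma).
\]
This is essentially a one-line verification.

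For part (a), write $\sigma^* = (e_0, e_1, \ldots, e_c)$ where $e_k = -d_{c-k}$. Then by definition,
\[
\beta_{c-i}(\pi_{\sigma^*}) = \prod_{k \neq c-i} \frac{1}{|e_{c-i} - e_k|} = \prod_{k \neq c-i} \frac{1}{|d_{c-k} - d_i|}.
\]
The key step is the substitution $j = c-k$: as $k$ ranges over $\{0,\ldots,c\} \setminus \{c-i\}$, the index $j = c-k$ ranges over $\{0,\ldots,c\} \setminus \{i\}$. This reindexing converts the product into $\prod_{j \neq i} \frac{1}{|d_j - d_i|} = \beta_i(\pi_\sigma)$, completing the proof.

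There really is no main obstacle here; both statements are immediate from the symmetry of the defining formula under translation (giving (b)) and under the order-reversing sign flip (giving (a)). The only care needed is in bookkeeping the indexing when defining $\sigma^*$, so that ``the $(c-i)$-th non-zero entry'' correctly refers to the entry of $\pi_{\sigma^*}$ whose homological index within the degree sequence $\sigma^*$ equals $c-i$.
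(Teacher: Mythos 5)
Your proof is correct and follows the same route as the paper, which simply asserts that both parts follow directly from the definition of pure Betti diagrams; you have merely made the one-line translation-invariance computation for (b) and the reindexing $j = c-k$ for (a) explicit. Your care with the index bookkeeping in part (a) is exactly what the paper leaves to the reader.
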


\begin{proof}
  Both claims follow directly from the definition of the pure Betti diagrams.
\end{proof}

We will refer to any Betti diagram of the form $\pi_{\sigma} + \pi_{m + \sigma^*}$ as a self-dual Betti diagram. This is justified by comparing Equation \eqref{eq:self-duality} with the following observation.

\begin{cor}
  \label{cor:self-dual-table}
For each integers $i$ and $j$, the entries of the diagram $\pi_{\sigma} + \pi_{m + \sigma^*}$ satisfy
\begin{equation*}
  \be_{i, i+j} = \be_{c-i, m-j}
\end{equation*}
\end{cor}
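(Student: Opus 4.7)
The plan is a direct position-by-position verification from Definition \ref{def:pure-diag} and Lemma \ref{lem:Betti-number-dual-pure}. First, I would unpack the two summands: column $i$ of $\pi_\sigma$ has its only nonzero entry at total degree $d_i$ with value $\be_i(\pi_\sigma)$, while column $i$ of $\pi_{m+\sigma^*}$ has its only nonzero entry at total degree $m - d_{c-i}$ with value $\be_i(\pi_{m+\sigma^*})$. Hence, in the sum $\pi_\sigma + \pi_{m+\sigma^*}$, column $i$ can be nonzero only at these (at most) two total degrees.

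Next, I would combine the two parts of Lemma \ref{lem:Betti-number-dual-pure} to derive the key identity
\[
\be_i(\pi_{m+\sigma^*}) \;=\; \be_i(\pi_{\sigma^*}) \;=\; \be_{c-i}(\pi_\sigma),
\]
where the first equality uses part (b) (a uniform shift of the degree sequence leaves the Betti values alone) and the second uses part (a) (dualizing a degree sequence reverses the column index). This identity is the heart of the self-duality: the two pure summands carry the same list of values, but distributed in mirror-image columns.

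Finally, I would establish the claimed symmetry by a short case analysis on which of the two possibly nonzero positions in column $i$ is being picked out. If the position corresponds to the $\pi_\sigma$-contribution in column $i$ (that is, total degree $d_i$), then the dual position lies in column $c-i$ at total degree $m-d_i$, which is exactly where the $\pi_{m+\sigma^*}$-contribution to column $c-i$ sits, and the two values agree by the identity above. The other case, in which the LHS picks out the $\pi_{m+\sigma^*}$-contribution, is symmetric, pairing with the $\pi_\sigma$-contribution in column $c-i$ at total degree $d_{c-i}$. At all remaining positions both sides vanish trivially. The only real bookkeeping effort is confirming that the prescribed involution correctly swaps contributions of $\pi_\sigma$ with those of $\pi_{m+\sigma^*}$ in mirrored columns; the degenerate subcase in which $d_i = m - d_{c-i}$ (so that the two entries in column $i$ collapse onto a single total degree) causes no difficulty, since the identity is linear in the two diagrams.
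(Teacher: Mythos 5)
Your proof is correct and follows the same route as the paper, which records the corollary as a one-line consequence of Lemma~\ref{lem:Betti-number-dual-pure}; you have simply made the bookkeeping explicit. The central identity $\be_i(\pi_{m+\sigma^*})=\be_i(\pi_{\sigma^*})=\be_{c-i}(\pi_\sigma)$, the location of the two nonzero entries per column, the case analysis pairing the $\pi_\sigma$-entry at $(i,d_i)$ with the $\pi_{m+\sigma^*}$-entry at $(c-i,m-d_i)$ and vice versa, and the observation that the degenerate collision $d_i=m-d_{c-i}$ is harmless by linearity are all exactly what is needed. One small remark: your bookkeeping in fact establishes the symmetry in the form $\be_{i,j}=\be_{c-i,m-j}$ (with $j$ the total degree), which matches Equation~\eqref{eq:self-duality} and Example~\ref{ex:quasi-Gor}; as printed, the corollary's left-hand side reads $\be_{i,i+j}$, which carries a spurious shift by $i$, so your derivation is effectively targeting the intended statement rather than the literal one.
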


\begin{proof}
  This is a consequence of Lemma \ref{lem:Betti-number-dual-pure}.
\end{proof}

We are ready for the main result if this section.

\begin{thm}
  \label{thm:BS-self-dual}
Let $M$ be a quasi-Gorenstein module of codimension $c$.  Set $m = \reg M + c +  a(M)$. Then the Boij-S\"oderberg decomposition of $M$ is an integer linear combination of self-dual Betti diagrams of the form $\pi_{\sigma} + \pi_{m + \sigma^*}$ and, in case the number of Boij-S\"oderberg summands of $M$ is odd, a pure diagram $\pi_{\sigma}$ such that $\pi_{\sigma} = \pi_{m + \sigma^*}$.
\end{thm}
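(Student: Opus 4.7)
The plan is to combine the self-duality of $\be(M)$ that already lives at the level of the resolution with the uniqueness statement of Theorem \ref{thm:BS-decomposition}. First I would observe that for a quasi-Gorenstein module the Betti table itself is self-dual in the sense of Corollary \ref{cor:self-dual-table}, that is, $\be_{i,j}(M) = \be_{c-i,m-j}(M)$ for all $i,j$. This is a consequence of Equation \eqref{eq:self-duality}: since $M \cong K_M(t)$, the module $\Ext^c_R(M,R)$ agrees with $M$ up to a degree shift, and that shift is precisely absorbed by the choice $m = \reg M + c + a(M)$ coming from the self-duality of the minimal free resolution of $M$.

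Next I would set up the entrywise dualization $D$ on Betti tables defined by $D(\be)_{i,j} := \be_{c-i,m-j}$. A direct calculation from Definition \ref{def:pure-diag} shows that $D(\pi_\sigma) = \pi_{m+\sigma^*}$ for every pure diagram with degree sequence of length $c+1$, and that $D$ reverses the partial order on such pure diagrams: if $\sigma = (d_0, \ldots, d_c)$ and $\tau = (e_0, \ldots, e_c)$ satisfy $d_i \le e_i$ for every $i$, then $m - e_{c-i} \le m - d_{c-i}$ for every $i$, so $\pi_{m+\tau^*} \le \pi_{m+\sigma^*}$.

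Now I would apply $D$ term-by-term to the Boij-S\"oderberg decomposition $\be(M) = \sum_{i=1}^t a_i \pi_{\sigma_i}$ with chain $\pi_{\sigma_1} < \cdots < \pi_{\sigma_t}$. Self-duality of $\be(M)$ fixes the left-hand side, while the right-hand side becomes $\sum_{i=1}^t a_i \pi_{m+\sigma_i^*}$, which, after reindexing $i \mapsto t+1-i$, is again an increasing chain of pure diagrams with positive integer coefficients. The uniqueness half of Theorem \ref{thm:BS-decomposition} then forces $\pi_{\sigma_i} = \pi_{m+\sigma_{t+1-i}^*}$ and $a_i = a_{t+1-i}$ for every $i$. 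Pairing the summands along this involution — combining $a_i \pi_{\sigma_i}$ with $a_{t+1-i} \pi_{\sigma_{t+1-i}}$ into $a_i \bigl(\pi_{\sigma_i} + \pi_{m+\sigma_i^*}\bigr)$ for $i < t+1-i$ — produces the desired self-dual summands. When $t$ is odd, the central index $i = (t+1)/2$ is fixed by the involution, yielding a single pure diagram with $\pi_{\sigma_i} = \pi_{m+\sigma_i^*}$.

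The main technical step is the order-reversing and pure-diagram-preserving behavior of $D$, together with the mild check that all summands in the decomposition of the Cohen-Macaulay module $M$ are pure diagrams of length $c+1$ (so that the involution is indeed internal to the chain). Everything else is a clean bookkeeping argument built on top of the uniqueness of the Boij-S\"oderberg decomposition.
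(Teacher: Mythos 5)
Your argument is correct and follows essentially the same route as the paper: the order-reversing involution you call $D$ is exactly the dualization the paper implements via the dual resolution of $\Ext^c_R(M,R)$, and both proofs close by invoking the uniqueness half of the Boij--S\"oderberg decomposition to force $\pi_{\sigma_i} = \pi_{m+\sigma_{t+1-i}^*}$. The only cosmetic difference is that you phrase the dualization as an operator acting directly on Betti tables, while the paper passes through $\beta(\Ext^c_R(M,R))$; you also make explicit two small points the paper leaves implicit --- that all summands have length $c+1$ because $M$ is Cohen--Macaulay of codimension $c$, and that $a_i = a_{t+1-i}$.
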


\begin{proof}
Consider the Boij-S\"oderberg decomposition of the Betti table of $M$
\begin{equation*}
  \label{eq:BS-M}
\be (M) = \sum_{i=1}^t a_i \pi_{\sigma_i},
\end{equation*}
where $\pi_{\sigma_1} < \pi_{\sigma_2} < \cdots < \pi_{\sigma_t}$.

Observe that, for any pure diagrams $\pi_{\sigma}$ and $\pi_{\tau}$, the relation $\pi_{\sigma} < \pi_{\tau}$ implies $\pi_{\tau^*} < \pi_{\sigma^*}$.  It follows (see Sequence \eqref{eq:dual-res}) that the Betti table of $\Ext^c_R (M, R)$ has the Boij-S\"oderberg decomposition
\begin{equation*}
  \label{eq:BS-M}
\be (\Ext^c_R (M, R)) = \sum_{i=1}^t a_i \pi_{\sigma_i^*},
\end{equation*}
where $\pi_{\sigma_1^*} > \pi_{\sigma_2^*} > \cdots > \pi_{\sigma_t^*}$.

By assumption, $M$ is quasi-Gorenstein, and thus $\Ext^c_R (M, R)(-m) \cong M$ as graded $R$-modules. Hence, comparing the above decompositions and using the uniqueness of the Boij-S\"oderberg decomposition, we conclude that, for all $i$,
\[
\pi_{\sigma_i} = \pi_{m + \sigma_{t+1-i}^*}.
\]
Our claim follows.
\end{proof}

If $M$ is a Cohen-Macaulay module, then $M \oplus K_M(j)$ is a quasi-Gorenstein module for each integer $j$ (see \cite[Remark 2.5(iii)]{mod-liai}). Its Boij-S\"oderberg decomposition is determined by the one of $M$. Notice that the number of summands in the decomposition of $M \oplus K_M(j)$ is always even. This is not true for arbitrary quasi-Gorenstein modules. In the next section, we will exhibit explicit examples in the case of cyclic quasi-Gorenstein modules. Such a cyclic module is isomorphic to a Gorenstein ring, up to a degree shift. Here we give an example arising in the birational geometry of surfaces.

\begin{eg}
  \label{ex:quasi-Gor}
Let $S$ be a regular surface of general type such that the  canonical map is a birational morphisms onto its image  $Y \subset \PP^4$. If the geometric genus of $S$ is five and $K_S^2 = 11$, then the canonical ring $M = \oplus_{m \ge 0} H^0 (S, \cO_S (m K_S))$ is a quasi-Gorenstein module over the coordinate ring $R$ of $\PP^4$ with minimal free resolution of the form (see \cite[Theorem 1.5]{B})
\begin{equation*}
  0 \to R(-6) \oplus R^2 (-4) \to R^6 (-3) \to R \oplus R^2(-2) \to M \to 0.
\end{equation*}
The Boij-S\"oderberg decomposition of its Betti table is
\begin{align*}
  \be (M) & =  \begin{array}{c|ccc}
\beta_{i,j} & 0 & 1 & 2 \\
\hline
0 & 1 & \cdot & \cdot \\
1 & \cdot & \cdot & \cdot \\
2 & 2  &  6 &  2 \\
3 & \cdot & \cdot & \cdot \\
4 & \cdot & \cdot & 1
\end{array} \\
& =  8 \cdot \begin{array}{c|ccc}
\beta_{i,j} & 0 & 1 & 2 \\
\hline
0 & \frac{1}{12} & \cdot & \cdot \\
1 & \cdot & \cdot & \cdot \\
2 & \cdot  &  \frac{1}{3} &  \frac{1}{4} \\
3 & \cdot & \cdot & \cdot \\
4 & \cdot & \cdot & \cdot
\end{array} \;
+ 6 \cdot \begin{array}{c|ccc}
\beta_{i,j} & 0 & 1 & 2 \\
\hline
0 & \frac{1}{18} & \cdot & \cdot \\
1 & \cdot & \cdot & \cdot \\
2 & \cdot  &  \frac{1}{9} &  \cdot  \\
3 & \cdot & \cdot & \cdot \\
4 & \cdot & \cdot & \frac{1}{18}
\end{array} \;
+ 8 \cdot \begin{array}{c|ccc}
\beta_{i,j} & 0 & 1 & 2 \\
\hline
0 & \cdot & \cdot & \cdot \\
1 & \cdot & \cdot & \cdot \\
2 & \frac{1}{4}  &  \frac{1}{3} &  \cdot \\
3 & \cdot & \cdot & \cdot \\
4 & \cdot & \cdot & \frac{1}{12}
\end{array} .
\end{align*}
\end{eg}
Observe that the third summand is $\pi_{(2, 3, 6)} = \pi_{6 + (0,3,4)^*}$, where $\pi_{(0,3,4)}$ is the first summand, and that the second summand satisfies
$\pi_{(0,3,6)} = \pi_{6 + (0,3,6)^*}$, as predicted by Theorem~\ref{thm:BS-self-dual}.


\section{{Gorenstein rings}}
\label{sec:Gor-rings}

In this section we consider Gorenstein rings whose minimal free resolutions has at most two linear strands when one ignores the first and the last homological degree. Gorenstein rings with such a resolution occur naturally. In fact, any such resolution is the minimal free resolution of the Stanley-Reisner ring associated to the boundary complex of a simplicial polytope according to \cite{MN}.

The following result describes the Betti tables whose Boij-S\"oderberg decomposition we derive in this section.

\begin{lem}
  \label{lem:Gor-res}
Let $s, t, c$ be positive integers such that $s \ge 2t$ and $c \le n$. Then there is a homogeneous Gorenstein ideal $I \subset R$ of codimension $c$ such that the graded minimal resolution of $R/I$ has the shape
\begin{eqnarray*}
\lefteqn{ 0 \longrightarrow R(-s-c) \longrightarrow
\begin{matrix}
  R^{a_{c-1}} (-t-c+1) \\
  \oplus \\
  R^{a_1} (-s+t-c+1)
\end{matrix}
\longrightarrow \cdots } \\[1ex]
& & \hspace*{1.9cm}  \longrightarrow
\begin{matrix}
  R^{a_{2}} (-t-2) \\
  \oplus \\
  R^{a_{c-2}} (-s+t-2)
\end{matrix}
\longrightarrow
\begin{matrix}
  R^{a_{1}} (-t-1) \\
  \oplus \\
  R^{a_{c-1}} (-s+t-1)
\end{matrix}
\longrightarrow R \longrightarrow R/I \longrightarrow 0,
\end{eqnarray*}
where, for $i = 1,\ldots,c-1$,
\begin{equation*}
  a_i = \binom{c+t-1}{i+t}\binom{t-1+i}{t}.
\end{equation*}
\end{lem}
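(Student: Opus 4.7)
The plan is to realize $R/I$ via the Stanley--Reisner ring of the boundary complex of a suitable simplicial polytope, using the $g$-theorem together with the Migliore--Nagel correspondence \cite{MN}.

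First, I would determine the $h$-vector forced by the claimed resolution. The alternating sum of the putative graded Betti numbers is
\[
K(z) = 1 + \sum_{i=1}^{c-1} (-1)^i \bigl[ a_i z^{t+i} + a_{c-i} z^{s-t+i} \bigr] + (-1)^c z^{s+c},
\]
and dividing by $(1-z)^c$ should yield the $h$-vector of an Artinian reduction of $R/I$. A direct binomial manipulation using the explicit formula for $a_i$ (and the fact that the second linear strand contributes nothing in degrees $\le s-t$) shows that the coefficients of $K(z)/(1-z)^c$ are $h_i = \binom{c+i-1}{i}$ for $0 \le i \le t$, constantly $h_i = \binom{c+t-1}{t}$ on the plateau $t \le i \le s-t$, and symmetric $h_i = h_{s-i}$. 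The corresponding $g$-vector $g_i = \binom{c+i-2}{i}$ ($0 \le i \le t$) is the Hilbert function of the polynomial ring in $c-1$ variables truncated at degree $t$, hence an $O$-sequence by Macaulay's theorem.

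Second, by the $g$-theorem (Billera--Lee for sufficiency, Stanley for the necessary conditions) there exists a simplicial $s$-polytope $P$ on $c + s$ vertices realizing this $h$-vector, and I would invoke \cite{MN} to identify the graded Betti numbers of its Stanley--Reisner ring $K[\partial P]$ with those prescribed in the lemma. This produces a Gorenstein ideal $I(\partial P) \subset K[y_1,\ldots,y_{c+s}]$ of codimension $c$ whose minimal free resolution has the desired shape. To land inside $R = K[x_1,\ldots,x_n]$ for arbitrary $n \ge c$, I would take an Artinian reduction of $K[\partial P]$ modulo $s$ general linear forms (yielding an Artinian Gorenstein quotient of a polynomial ring in $c$ variables with the same graded Betti numbers) and then tensor with $K[x_{c+1},\ldots,x_n]$; this final extension preserves both the Gorenstein property and the entire Betti table.

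The principal obstacle is the second step: the $g$-theorem alone only controls the Hilbert function, and two simplicial polytopes with the same $h$-vector can in general have different graded Betti numbers. Pinning down the Betti numbers of $K[\partial P]$ to the stated two-linear-strand shape is the combinatorial content of \cite{MN}, which characterizes precisely which $h$-vectors force this resolution type. As a sanity check, the degenerate cases $c \le 2$ can be handled directly without recourse to polytopes: a codimension one Gorenstein ideal is principal of degree $s+1$, and a codimension two Gorenstein ideal is a complete intersection of degrees $t+1$ and $s-t+1$ (which requires $s \ge 2t$), matching $a_1 = \binom{t+1}{t+1}\binom{t}{t} = 1$ from the lemma's formula.
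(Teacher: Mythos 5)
The paper's proof is a single citation: Lemma~\ref{lem:Gor-res} ``follows by \cite[Theorem 8.13]{MN}.'' Your plan instead tries to reconstruct the existence statement via the $g$-theorem plus \cite{MN}, which is more in the spirit of what the paper later invokes from \cite[Theorem 9.6]{MN} (polytopal realization). The $h$-vector and $g$-vector computations you outline are correct and match the paper's Remark~\ref{rem:hilb-Gor}, and the trick of reducing to an Artinian quotient and then tensoring up to $R = K[x_1,\ldots,x_n]$ is standard and fine.

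However, your second step has a genuine gap as written. The $g$-theorem (Billera--Lee) only produces \emph{some} simplicial $s$-polytope on $c+s$ vertices realizing the prescribed $h$-vector; it says nothing about the graded Betti numbers of its Stanley--Reisner ring. Polytopes with the same $h$-vector routinely have different Betti tables, and a generic such polytope will have Betti numbers strictly \emph{smaller} than the ones in the lemma (which are extremal). You flag this concern, but then you misstate what \cite{MN} provides: it does \emph{not} characterize which $h$-vectors force this resolution. That is impossible without extra hypotheses --- the paper's own citation of \cite[Corollary 8.14]{MN} shows that the $h$-vector forces the resolution only under the additional assumption of the weak Lefschetz property. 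What \cite{MN} actually gives (in Theorem~8.13, resp.\ Theorem~9.6) is the \emph{construction of a specific} Gorenstein ideal, resp.\ a specific simplicial polytope (essentially a Billera--Lee-type construction tracked carefully enough to identify the Betti numbers), realizing the maximal two-strand Betti table. So to make your argument go through you must either cite the specific construction in \cite[Theorem 8.13]{MN} directly --- as the paper does, rendering the $g$-theorem detour unnecessary --- or replace ``there exists a polytope by the $g$-theorem'' with ``take the particular polytope built in \cite[Theorem 9.6]{MN}'' and suppress the $g$-theorem as a separate ingredient.
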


\begin{proof}
This follows by \cite[Theorem 8.13]{MN}.
\end{proof}

Ideals with this minimal free resolution arise in various ways.
Recall that the Hilbert series of any graded $K$-algebra $R/I$ can be uniquely written as
\begin{equation*}
  H_{R/I} (t) := \sum_{j \ge 0} \dim_K [R/I]_j = \frac{h(t)}{(1-t)^d},
\end{equation*}
where $h (1) \neq 0$, $h(t) = h_0 + h_1 t + \cdots h_r t^r \in \Z[t]$, and $d = \dim R/I$. The coefficient vector $(h_0,h_1,\ldots,h_r)$ is called the {\em $h$-vector} of $R/I$.
\begin{rmk}
  \label{rem:hilb-Gor}
Assume $R/I$ is a Gorenstein ring with a free resolution as in Lemma \ref{lem:Gor-res}. Then its $h$-vector $h=(h_0,...,h_s)$ is given by
\begin{equation*}
h_i = \begin{cases}
  \binom{c-1+i}{c-1} & \text{ if }   0 \leq i \leq t;\\[1ex]
  \binom{c-1+t}{c-1} & \text{ if }   t\leq i\leq s-t;\\[1ex]
  \binom{s-i+c-1}{c-1} & \text{ if }  s-t \leq i \le s.
\end{cases}
\end{equation*}
\end{rmk}
\medskip

Conversely, Gorenstein rings with this $h$-vector are often forced to have a free resolution as described in Lemma \ref{lem:Gor-res}. To this recall that a graded Gorenstein algebra $R/I$ of dimension $d$ has the {\em weak Lefschetz property} if there are linear forms $\ell, \ell_1,\ldots,\ell_d$ such that $A = R/(I, \ell_1,\ldots,\ell_d)$ has dimension zero and, for each $j$, the multiplication map
\begin{equation*}
  \times \ell: [A]_{j-1} \longrightarrow [A]_j, \quad a \mapsto \ell a,
\end{equation*}
has maximal rank, that is, it is injective or surjective.

\begin{thm}[\mbox{\cite[Corollary 8.14]{MN}}]
Let $c,s,t$ be positive integers, where either $s = 2t$ or $s \ge 2t+2$. Let $R/I$ be a Gorenstein algebra of dimension $d = n-c$ with an $h$-vector as in Remark \ref{rem:hilb-Gor}. If $R/I$ has the weak Lefschetz property, then $R/I$ has a minimal free resolution as in Lemma \ref{lem:Gor-res}.
\end{thm}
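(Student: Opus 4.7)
The plan is to reduce to the Artinian case, use the weak Lefschetz property to identify the hyperplane section with a truncated polynomial ring, and then combine the known pure resolution of that truncation with the Gorenstein self-duality established in Theorem~\ref{thm:BS-self-dual} to pin down the shape of the resolution of $R/I$.

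First, since $R/I$ has positive dimension $d = n - c$, I would choose a general linear regular sequence $\ell_1,\ldots,\ell_d$ on $R/I$ so that the Artinian reduction $A = R/(I,\ell_1,\ldots,\ell_d)$ has the same graded Betti numbers as $R/I$ over its polynomial subring and still enjoys WLP by genericity. Thus we may assume $A$ is an Artinian Gorenstein algebra of embedding codimension $c$ with WLP and the prescribed $h$-vector, and it suffices to show $A$ has a resolution of the form in Lemma~\ref{lem:Gor-res}.

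Next, let $\ell \in A_1$ be a Lefschetz element. Because $A$ is Gorenstein with WLP, the multiplication $\times\ell\colon A_{j-1}\to A_j$ has maximal rank in each degree, so the Hilbert function of $\bar A := A/\ell A$ equals $\max\{\Delta h_j,\,0\}$. A direct calculation from the formula in Remark~\ref{rem:hilb-Gor} gives $\Delta h_j = \binom{c-2+j}{c-2}$ for $0\le j\le t$ and $\Delta h_j = 0$ for $t<j\le s-t$, so $\bar A$ has the Hilbert function of $R'/(\mathfrak m')^{t+1}$, where $R'$ is a polynomial ring in $c-1$ variables with maximal ideal $\mathfrak m'$. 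Since the defining ideal of $\bar A$ must contain $(\mathfrak m')^{t+1}$ in order to vanish above degree $t$, comparing Hilbert functions forces $\bar A \cong R'/(\mathfrak m')^{t+1}$ as graded $R'$-algebras.

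The minimal free resolution of $R'/(\mathfrak m')^{t+1}$ over $R'$ is the classical Eagon-Northcott-type pure $t$-linear resolution with ranks exactly $a_i = \binom{c+t-1}{i+t}\binom{t-1+i}{t}$. Lifting this information back to $A$ via the change-of-rings spectral sequence for $R\twoheadrightarrow R/(\ell)\cong R'$ yields upper bounds $\be_{i,i+t}(A)\le a_i$ in the lower linear strand. Equality is then forced by a Hilbert series count: the alternating sum of shifted ranks in the minimal free resolution must reproduce the $h$-vector of Remark~\ref{rem:hilb-Gor}, so any strict inequality at $(i,i+t)$ would have to be compensated by a nonzero $\be_{k,\ell}$ in some other position, which the Gorenstein self-duality of Theorem~\ref{thm:BS-self-dual} forces to occur in a symmetric pair and then contradicts the already-fixed Hilbert function. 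Propagating the lower strand through that self-duality then produces the symmetric upper strand in degrees $s-t+i-1$, giving the full shape of Lemma~\ref{lem:Gor-res}. The two cases $s=2t$ and $s\ge 2t+2$ differ only in whether the two linear strands share their middle column or are separated by a gap; the same argument applies to both, with the hypothesis $s\ne 2t+1$ precisely ensuring that the degree sequences in the two strands do not overlap in a forbidden way.

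The main obstacle is the lifting step from the known resolution of $\bar A$ back to $A$: one must exclude extra Betti numbers appearing in middle positions outside the two predicted linear strands. The rigidity needed comes from combining the precise Hilbert series already pinned down by $h$ with the Gorenstein self-duality, which forces any spurious Betti number to appear in a self-dual pair and then to perturb the $h$-vector in an impossible way.
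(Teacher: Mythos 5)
The paper does not actually prove this statement: it is imported verbatim as \cite[Corollary 8.14]{MN} and used as a black box. So there is no ``paper's own proof'' to compare against, and I will simply assess your argument on its own terms.

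Your overall plan (Artinian reduction, identify the general hyperplane section with $R'/(\mathfrak m')^{t+1}$, lift back) is the natural one, and the first two steps are sound. The reduction to the Artinian case is standard, the WLP computation of the Hilbert function of $\bar A = A/\ell A$ is correct, and comparing dimensions in each degree really does force $\bar A \cong R'/(\mathfrak m')^{t+1}$; the Eagon--Northcott ranks $\binom{c+t-1}{i+t}\binom{t+i-1}{t}$ do match the $a_i$ of Lemma~\ref{lem:Gor-res}.

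The problem is the lifting step. You invoke ``the change-of-rings spectral sequence for $R \twoheadrightarrow R/(\ell) \cong R'$'' to conclude $\be_{i,i+t}(A) \le a_i$, but $\ell$ is a Lefschetz element on the Artinian algebra $A$, \emph{not} a nonzerodivisor on $A$, so the standard change-of-rings machinery for regular elements simply does not apply. What one actually has is the four-term exact sequence
\[
0 \longrightarrow (0:_A\ell)(-1) \longrightarrow A(-1) \stackrel{\times\ell}{\longrightarrow} A \longrightarrow \bar A \longrightarrow 0,
\]
together with the Gorenstein duality identification of $(0:_A\ell)$ with $\bar A^\vee$ up to a shift; splitting it into two short exact sequences and chasing the long exact sequences of $\Tor$ is where the real work lies, and your argument skips it entirely. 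Your closing ``rigidity'' argument also does not close the gap: the $h$-vector only constrains the alternating sums $\sum_i (-1)^i \be_{i,j}$, not the individual Betti numbers, so a spurious Betti number need not ``perturb the $h$-vector'' -- it could cancel in the alternating sum. The Gorenstein symmetry from Theorem~\ref{thm:BS-self-dual} does impose a pairing constraint, but making this rule out cancellation requires precisely the kind of degree bookkeeping that the exclusion of $s = 2t+1$ is designed to control, and that step needs to be carried out explicitly rather than asserted. As written, the proof identifies the right ingredients but does not establish the upper bound or the no-cancellation claim that the conclusion rests on.
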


Furthermore, it follows by \cite[Theorem 9.6]{MN} that each of the resolutions described in Lemma \ref{lem:Gor-res} occurs as the minimal free resolution of the Stanley-Reisner ring associated to the boundary complex of a simplicial polytope. Our main result in this section describes the Boij-S\"oderberg decomposition of the corresponding Betti table.

\begin{thm}
   \label{thm:Gor-ideals}
Let $R/I$ be a Gorenstein ring with a free resolution as in Lemma \ref{lem:Gor-res}.  Then the Boij-S\"oderberg decomposition of $R/I$ is
\begin{equation}
   \label{eq:BS-Gor-decomp}
 \beta(R/I)= a \cdot [\pi_{\sigma_1} + \pi_{\sigma_c}]  + b \cdot \sum_{j=2}^{c-1} \pi_{\sigma_j}, \end{equation}
where
\begin{align*}
  a = & (s+1-t) \frac{(t+c-1)!}{t!}, \\
  b = & (s+1-2t) \frac{(t+c-1)!}{t!},
\end{align*}
and
\begin{equation*}
  \sigma_j = (0,d_{j, 1},\ldots,d_{j, c-1},s+c)
\end{equation*}
with
\[
d_{j, k} = \begin{cases}
  t+k & \text{if } 1 \le k \le c-j \\
  s-t+k & \text{if } c-j + 1 \le k \le c-1
\end{cases}.
\]
\end{thm}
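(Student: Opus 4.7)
The plan is to verify the asserted identity \eqref{eq:BS-Gor-decomp} directly and invoke uniqueness of the Boij-S\"oderberg decomposition (Theorem \ref{thm:BS-decomposition}). By Lemma \ref{lem:Gor-res}, the Betti table $\beta(R/I)$ is supported only on the four families of positions $(0,0)$, $(c, s+c)$, $(i, t+i)$, and $(i, s-t+i)$ for $1 \le i \le c-1$, with values $1$, $1$, $a_i$, and $a_{c-i}$, respectively. Each proposed summand $\pi_{\sigma_j}$ is also supported within these same four regions, so the verification reduces to finitely many entry-wise identities.

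First I would check the chain condition $\pi_{\sigma_1} \le \pi_{\sigma_2} \le \cdots \le \pi_{\sigma_c}$ demanded by Theorem \ref{thm:BS-decomposition}. Consecutive degree sequences $\sigma_j$ and $\sigma_{j+1}$ agree except in position $k = c - j$, where the entry jumps from $t + c - j$ to $s - t + c - j$; the hypothesis $s \ge 2t$ makes this weakly increasing, and strictly increasing when $s > 2t$. In the degenerate case $s = 2t$ all $\sigma_j$ coincide, the two linear strands merge, and the right-hand side of \eqref{eq:BS-Gor-decomp} collapses to a scalar multiple of a single pure diagram that can be checked separately; I will therefore assume $s > 2t$ for the main argument.

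Next I would exploit self-duality. Since $R/I$ is Gorenstein with $a(R/I) = 0$ and $\reg(R/I) = s$, Theorem \ref{thm:BS-self-dual} applies with $m = s + c$. A direct computation from Definition \ref{def:dual-pure} shows $(s+c) + \sigma_j^* = \sigma_{c+1-j}$, so the pairing predicted by Theorem \ref{thm:BS-self-dual} matches $\pi_{\sigma_j}$ with $\pi_{\sigma_{c+1-j}}$. Since the proposed coefficients are symmetric under $j \mapsto c+1-j$ ($a$ on the outer pair, $b$ on each inner pair), the right-hand side of \eqref{eq:BS-Gor-decomp} is automatically self-dual: swapping $(i,j) \leftrightarrow (c-i, s+c-j)$ leaves its entries invariant, matching the self-duality of $\beta(R/I)$. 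It therefore suffices to verify equality at the single corner $(0,0)$ and along the first linear strand $(i, t+i)$ for $1 \le i \le c-1$; the matching at $(c, s+c)$ and along the second strand then follows for free.

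It remains to compute the relevant pure Betti numbers via Definition \ref{def:pure-diag} and check the two families of identities. At the corner $(0,0)$ each $\pi_{\sigma_j}$ contributes the reciprocal of $(s+c) \prod_{k=1}^{c-j}(t+k) \prod_{k=c-j+1}^{c-1}(s-t+k)$, and summing with weights $a$ (for $j \in \{1, c\}$) and $b$ (for $2 \le j \le c-1$) must yield $1$. At position $(i, t+i)$ only the summands with $1 \le j \le c-i$ contribute, because position $i$ lies in the second linear strand of $\sigma_j$ whenever $j > c-i$; substituting the claimed closed forms for $a$ and $b$ must reduce the resulting finite sum to $a_i = \binom{c+t-1}{i+t}\binom{t+i-1}{t}$. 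The main obstacle is precisely this combinatorial identity: the factors $(s - 2t + k - i)$ in the denominators of $\beta_i(\pi_{\sigma_j})$ telescope across $j$, and the specific form of $a$ and $b$ is engineered so that the telescoped sum closes, after a Vandermonde--Chu type identity, into the stated closed form. Once these identities are established, uniqueness of the Boij-S\"oderberg decomposition completes the argument.
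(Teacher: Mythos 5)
Your proposal follows essentially the same route as the paper: verify the decomposition entrywise along one linear strand and at the corner $(0,0)$, observing that only $\pi_{\sigma_1},\ldots,\pi_{\sigma_{c-i}}$ contribute at $(i,t+i)$, invoke the self-duality $\pi_{\sigma_j}=\pi_{(s+c)+\sigma_{c+1-j}^*}$ to handle the mirror positions, and close the resulting sums by telescoping, which is exactly what the paper isolates as Lemma~\ref{lem:Gor-identity}. The only deviations are cosmetic: the paper's identity is pure telescoping with no Vandermonde--Chu step needed, and the paper does not single out the degenerate case $s=2t$ (where all $\sigma_j$ coincide) as you do.
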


As preparation for its proof, we derive the following identity.

\begin{lem}
  \label{lem:Gor-identity}
If $a, b, m$ are positive integers such that $m \leq a$, then
\begin{equation*}
  \sum_{j=1}^m \frac{\binom{a}{j}}{\binom{a+b}{j}} = - \frac{(a+b+1-m) \binom{a}{m+1}}{(b+1) \binom{a+b}{m+1}} + \frac{a}{b+1}.
\end{equation*}
\end{lem}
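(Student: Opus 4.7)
The plan is to proceed by induction on $m \ge 1$, since the left-hand side of the identity is visibly telescoping in $m$: incrementing $m$ adds exactly the single summand $\binom{a}{m}/\binom{a+b}{m}$. Writing $R(m)$ for the right-hand side of the stated equation, the inductive step reduces to checking the incremental identity
\[
R(m) - R(m-1) = \frac{\binom{a}{m}}{\binom{a+b}{m}}.
\]

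For the base case $m = 1$, I would expand the right-hand side explicitly, using $\binom{a}{2} = a(a-1)/2$ and $\binom{a+b}{2} = (a+b)(a+b-1)/2$, and verify by direct cancellation that it collapses to $a/(a+b)$, which matches the lone summand on the left.

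For the inductive step, after forming $R(m) - R(m-1)$, multiplying through by $(b+1)$, and isolating the term containing $\binom{a}{m}/\binom{a+b}{m}$, the whole claim reduces to the single algebraic identity
\[
(a-m)\binom{a}{m}\,\binom{a+b}{m+1} \;=\; (a+b-m)\binom{a}{m+1}\,\binom{a+b}{m}.
\]
Applying the two standard falling-factorial recurrences
\[
(a-m)\binom{a}{m} = (m+1)\binom{a}{m+1}, \qquad (a+b-m)\binom{a+b}{m} = (m+1)\binom{a+b}{m+1},
\]
both sides reduce to the common quantity $\tfrac{(a-m)(a+b-m)}{m+1}\binom{a}{m}\binom{a+b}{m}$, completing the induction.

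The only real obstacle is the algebraic bookkeeping: a number of factors of the form $(a+b+1-k)$, $\binom{a}{k}$, and $\binom{a+b}{k}$ appear simultaneously in $R(m) - R(m-1)$, and one must combine them carefully (separating out the piece that becomes $\binom{a}{m}/\binom{a+b}{m}$ from the residual term) before the two binomial recurrences can be applied. Once that regrouping is done, the identity collapses in a single line, so the main effort is clerical rather than conceptual.
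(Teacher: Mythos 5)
Your approach---induction on $m$, reducing to the increment $R(m)-R(m-1)=\binom{a}{m}/\binom{a+b}{m}$---is exactly the paper's telescoping argument in different clothing. The paper introduces $\mu(j)=-\frac{(a+b-j+1)\binom{a}{j}}{(b+1)\binom{a+b}{j}}$, verifies $\mu(j+1)-\mu(j)=\binom{a}{j}/\binom{a+b}{j}$, and sums; your $R(m)$ is $\mu(m+1)+\frac{a}{b+1}$, and your inductive step is literally that telescoping identity. So there is no genuine difference in method, and your algebra (the base case collapsing to $a/(a+b)$; the inductive step reducing to $(a-m)\binom{a}{m}\binom{a+b}{m+1}=(a+b-m)\binom{a}{m+1}\binom{a+b}{m}$, then verified with the falling-factorial recurrences) is all correct.

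However, be aware of a discrepancy you seem to have silently fixed: the lemma \emph{as stated} has $(a+b+1-m)$ in the numerator, but the correct factor is $(a+b-m)$. One can see this already in the paper's own proof: $\mu(m+1)=-\frac{(a+b-m)\binom{a}{m+1}}{(b+1)\binom{a+b}{m+1}}$, not $-\frac{(a+b+1-m)\binom{a}{m+1}}{(b+1)\binom{a+b}{m+1}}$, so the final displayed line of the paper's proof is a substitution slip. Your reported reductions are consistent only with the corrected factor $(a+b-m)$. With the factor $(a+b+1-m)$ as printed, the base case actually evaluates to $\frac{ab}{(b+1)(a+b-1)}$ rather than $\frac{a}{a+b}$ (check $a=2,\,b=1$: it gives $\tfrac12$, not $\tfrac23$), and the cross-multiplied identity in the inductive step becomes $(a+1-m)\binom{a}{m}\binom{a+b}{m+1}=(a+b+1-m)\binom{a}{m+1}\binom{a+b}{m}$, which is false for $b\ge 1$. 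So either flag and correct the typo in the statement or make explicit that you are proving the corrected identity; as written, your proof claims to establish the printed formula but the intermediate computations do not follow from it.
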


\begin{proof}
Define
\[
\mu(j)=-\frac{(a+b-j+1)\binom{a}{j}}{(b+1)\binom{a+b}{j}}.
\]
Then one checks that
\[
\frac{\binom{a}{j}}{\binom{a+b}{j}}=\mu(j+1)-\mu(j).
\]
Hence, we get the telescope sum
\begin{align*}
  \sum_{j=1}^m \frac{\binom{a}{j}}{\binom{a+b}{j}} & =  \sum_{j=1}^m\mu(j+1)-\mu(j) \\
& = \mu (m+1) - \mu (1) \\
& =  - \frac{(a+b+1-m) \binom{a}{m+1}}{(b+1) \binom{a+b}{m+1}} + \frac{a}{b+1},
\end{align*}
as claimed.
\end{proof}

\begin{proof}[Proof of Theorem \ref{thm:Gor-ideals}]
Let $1 \le i \le c-1$, and consider the graded Betti number $\be_{i, t+i} = a_i$. According to Formula \eqref{eq:BS-Gor-decomp}, we claim that precisely the pure Betti tables $\pi_{\sigma_1},\ldots,\pi_{\sigma_{c-i}}$ contribute to this Betti number. For $j=1,\ldots,c-i$, the degree sequence $\sigma_j$ is
\[
\sigma_j=(0,t+1,\ldots,t+i,\ldots,t+c-j,s-t+c-j,\ldots,s-t+c-1,c+s).
\]
Hence, we get for the non-zero entry in homological degree $i$ of $\pi_{\sigma_j}$
\begin{equation*}
  \be_i (\pi_{\sigma_j}) = \frac{(s-2t+c-i-j)!}{(t+i) (s+c-t-i) (i-1)! (c-j-i)! (s-2t+c-1-i)!}.
\end{equation*}
Using that $a = \frac{s+1-t}{s+1-2t} \cdot b$, our claim for $\be_{i, t+i}$ is equivalent to
\begin{equation*}
  \frac{a_i}{b} = \frac{s+1-t}{s+1-2t} \cdot \be_i (\pi_{\sigma_1}) + \sum_{j=2}^{c-i} \be_i (\pi_{\sigma_j}).
\end{equation*}
Simplifying $\frac{a_i}{b}$, this means that we have to show
\begin{eqnarray*}
\lefteqn{ \frac{1}{(t+i) (s+1-2t) (i-1)! (c-j-i)!} =  } \\
& & \hspace*{2.5cm} \frac{s+1-t}{s+1-2t} \cdot \frac{1}{(t+i) (s+c-t-i) (i-1)! (c-1-i)!} \\
& & \hspace*{2.5cm} + \sum_{j=2}^{c-i} \frac{(s-2t+c-i-j)!}{(t+i) (s+c-t-i) (i-1)! (c-j-i)! (s-2t+c-1-i)!}.
\end{eqnarray*}
The latter is equivalent to
\begin{align*}
\frac{s+c-i-t}{s+1-2t}
 & = \frac{s+1-t}{s+1-2t}  + \sum_{j=2}^{c-i} \frac{(s-2t+c-i-j)! (c-1-i)!}{ (s-2t+c-1-i)! (c-j-i)! } \\
& = \frac{s+1-t}{s+1-2t}  + \sum_{j=2}^{c-i} \frac{\binom{c-1-i}{j-1}}{\binom{s-2t+c-1-i}{j-1}} \\
& = \frac{s+1-t}{s+1-2t}  + \frac{c-1-i}{s+1-2t},
\end{align*}
which is certainly true and
where we used Lemma \ref{lem:Gor-identity} with $a = c-1-i$ and $b = s-2t$ to establish  the last equality.

Using the symmetry on both sides of Identity \eqref{eq:BS-Gor-decomp}, it only remains to check the claim for the Betti number $\be_0 (R/I) = 1$. To this end note that, for each $j=1,\ldots,c$,
\begin{equation*}
  \be_0 (\pi_{\sigma_j}) = \frac{t! \, (s-t+c-j)!}{(s+c) \, (t+c-j)! \, (s-t+c-1)!}.
\end{equation*}
Thus, we have to show the identity
\begin{align*}
  1 = & \frac{(t+c-1)!}{t!} \left [(s+1-t) \left (
\frac{t!}{(s+c) \, (t+c-1)!} + \frac{(s-t)!}{(s+c) \, (s-t+c-1)!} \right ) \right. \\[1ex]
& \left. + \, (s+1-2t) \sum_{j=2}^{c-1} \frac{t! \, (s-t+c-j)!}{(s+c) \, (t+c-j)! \, (s-t+c-1)!} \right ] .
\end{align*}
It is equivalent to
\begin{align*}
\frac{s+c}{s+1-2t} &  =  \frac{s+1-t}{s+1-2t} \left [ 1 + \frac{(s-t)! \, (t+c-1)!}{(s-t+c-1)! \, t!} \right ] + \sum_{j=2}^{c-1} \frac{(t+c-1)! \, (s-t+c-j)!}{(t+c-j)! \, (s-t+c-1)!} \\[1ex]
& = \frac{s+1-t}{s+1-2t} \left [ 1 + \frac{\binom{t+c-1}{c-1}}{\binom{s-t+c-1}{c-1}} \right ] + \sum_{j=2}^{c-1} \frac{\binom{t+c-1}{j-1}}{\binom{s-t+c-1}{j-1}} \\[1ex]
& = \frac{s+1-t}{s+1-2t} \left [ 1 + \frac{\binom{t+c-1}{c-1}}{\binom{s-t+c-1}{c-1}} \right ] -  \frac{(s+1-t) \, \binom{t+c-1}{c-1}}{(s+1-2t) \, \binom{s-t+c-1}{c-1}} + \frac{t+c-1}{s+1-2t},
\end{align*}
which is certainly true and
where we used Lemma \ref{lem:Gor-identity} with $a = t+c-1$ and $b = s+1-2t$ to establish  the last equality. This completes the argument.
\end{proof}

\begin{rmk} Notice that the summands appearing in the Boij-S\"oderberg decomposition in Theorem \ref{thm:Gor-ideals} satisfy
\[
\pi_{\sigma_i} = \pi_{s+c + \sigma_{c+1-i}^*}.
\]
This is in accordance with Theorem \ref{thm:BS-self-dual}.
\end{rmk}

We illustrate the last result in the case of stacked polytopes. Recall that a $d$-dimensional simplicial polytope is stacked if it admits a triangulation $\Gamma$ which is a $(d-1)$-tree, that is, $\Gamma$ is a shellable $(d-1)$-dimensional simplicial complex with $h$-vector $(1, c-1)$. For example, such a polytope is obtained by pairwise gluing of $d$-simplices along a facet. The following result shows how the decomposition of the Betti table of its boundary complex reflects the data that determine the polytope.

\begin{cor}
   \label{cor:stacked-polytope}
Let $\Delta$ be the boundary complex of a stacked polytope with $n = c+d$ vertices  that is obtained by stacking $c$ simplices of dimension $d$.
Then the Betti table of its Stanley-Reisner ring $K[\Delta]$ has the   Boij-S\"oderberg decomposition
\begin{equation*}
   \label{eq:BS-decomp-stacked}
 \beta(K[\Delta])= d \cdot c! \cdot \left [\pi_{\sigma_1} + \pi_{\sigma_c} \right]  + (d-1) \cdot c!  \cdot \sum_{j=2}^{c-1} \pi_{\sigma_j},
\end{equation*}
where
\[
\sigma_j=(0,2,..,c-j+1,n-1-j,..,n-2,n).
\]
\end{cor}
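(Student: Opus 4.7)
The plan is to obtain the result as a direct specialization of Theorem \ref{thm:Gor-ideals}, by matching the parameters $(c, t, s)$ of that theorem to the invariants of the stacked polytope. First I would recall the standard fact that the boundary complex $\Delta$ of a stacked $d$-polytope with $n = c+d$ vertices has $h$-vector $(1, c, c, \ldots, c, 1)$ of length $d+1$, with $d-1$ middle entries equal to $c$ (equivalently, its $g$-vector is $(1, c-1, 0, \ldots, 0)$, which is the defining feature of stacked polytopes). Comparing this with the $h$-vector formula of Remark \ref{rem:hilb-Gor}, one reads off $t = 1$ and $s = d$, while the codimension of $K[\Delta]$ equals $n - d = c$, identifying the symbol $c$ of the remark with the number of simplices in the stacking.

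Second I would verify the hypotheses of the theorem preceding Theorem \ref{thm:Gor-ideals} (\cite[Corollary 8.14]{MN}). The ring $K[\Delta]$ is Gorenstein because $\Delta$ is the boundary complex of a simplicial polytope, and it has the weak Lefschetz property; for stacked polytopes this is transparent since the $h$-vector arises from a generic linear section of an algebra whose $g$-vector $(1, c-1, 0, \ldots, 0)$ is trivially an $M$-sequence. Consequently the minimal free resolution of $K[\Delta]$ has the shape of Lemma \ref{lem:Gor-res} with parameters $(c, 1, d)$, and Theorem \ref{thm:Gor-ideals} applies.

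Finally I would substitute $t = 1$ and $s = d$ into the formulas of Theorem \ref{thm:Gor-ideals}. The coefficients simplify to
\[
a = (d+1-1) \cdot \frac{(1+c-1)!}{1!} = d \cdot c!, \qquad b = (d+1-2) \cdot \frac{(1+c-1)!}{1!} = (d-1) \cdot c!,
\]
and the middle entries of $\sigma_j$ become $d_{j,k} = 1+k$ on the range $1 \le k \le c-j$ (producing $2, 3, \ldots, c-j+1$) and $d_{j,k} = d-1+k$ on $c-j+1 \le k \le c-1$ (producing $n-j, n-j+1, \ldots, n-2$, using $n = c+d$). Together with $s+c = n$ this recovers the degree sequences displayed in the corollary. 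The main obstacle is the second step: justifying that the minimal free resolution of $K[\Delta]$ has the precise shape of Lemma \ref{lem:Gor-res}, not merely the correct $h$-vector; once WLP is invoked, everything else reduces to direct substitution into formulas already established.
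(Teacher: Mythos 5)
Your overall plan matches the paper's proof exactly: set $t=1$, $s=d$, $\operatorname{codim} K[\Delta] = c$, and substitute into Theorem~\ref{thm:Gor-ideals}. Your arithmetic in the final step is correct and recovers $a = d\cdot c!$, $b = (d-1)\cdot c!$, and the displayed $\sigma_j$. The paper's proof is a single line asserting exactly this specialization.

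The problem is your second step. You invoke the weak Lefschetz property and \cite[Corollary~8.14]{MN} to conclude that $K[\Delta]$ has a resolution of the shape in Lemma~\ref{lem:Gor-res}, but this route has two gaps. First, that theorem explicitly requires $s = 2t$ or $s \ge 2t+2$; with $t=1$ and $s=d$ this excludes $d = 3$, which is precisely the case worked out in Example~\ref{ex:stacked}. Second, the argument you give for WLP is not a proof: the assertion that the $g$-vector $(1, c-1, 0, \ldots, 0)$ is an $M$-sequence is a statement about the $h$-vector (the $g$-theorem), not about the behavior of multiplication by a generic linear form on an Artinian reduction of $K[\Delta]$. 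Whether Stanley--Reisner rings of boundary complexes of simplicial polytopes have WLP is in general a hard question, and it is not ``transparent'' for stacked polytopes. The paper sidesteps this entirely: the claim that $K[\Delta]$ has the Betti numbers of Lemma~\ref{lem:Gor-res} with $t=1$, $s=d$ is taken as known for stacked polytopes (it rests on explicit computations of graded Betti numbers of stacked-polytope Stanley--Reisner rings in \cite{MN} and its predecessors), not deduced from a Lefschetz-type hypothesis. Replacing your WLP paragraph with a direct citation of that computed resolution would close the gap and align your argument with the paper's.
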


\begin{proof}
The graded Betti numbers of $K[\Delta]$ are given by Lemma \ref{lem:Gor-res} with
$s=d = n-c$ and $t=1$. Hence Theorem  \ref{thm:Gor-ideals} yields the claim.
\end{proof}

\begin{rmk}
In the special case of 3-dimensional stacked polytopes ($d = 3$),  Corollary \ref{cor:stacked-polytope}  establishes a conjecture made in the report of group 10.1 (V. Kalyankar, V. Lorman, S. Seo, M. Stamps, Z. Yang) at the MSRI summer workshop in commutative algebra 2011.
\end{rmk}

We conclude by considering a specific instance of the last result.

\begin{eg}
   \label{ex:stacked}
Consider a 3-dimensional polytope on the seven vertices $a,\ldots,f,v$, obtained by stacking four 3-simplices with common vertex $v$.
\begin{center}
\begin{tikzpicture}[scale=1, vertices/.style={draw, fill=black, circle, inner sep=0pt}]
    \node [vertices, label=left:{$a$}] (1) at (0,0){};
    \node [vertices, label=right:{$b$}] (2) at (1,0){};
    \node [vertices, label=left:{$c$}] (3) at (0,1){};
    \node [vertices, label=right:{$d$}] (4) at (1,1){};
    \node [vertices, label=left:{$e$}] (5) at (0,2){};
    \node [vertices, label=right:{$f$}] (6) at (1,2){};
    \node [vertices, label=below:{$v$}] (7) at (1/2,1.2){};
  \foreach \to/\from in {1/2,1/3,2/3,2/4,3/4,3/5,4/5,4/6,5/6, 7/1,7/2,7/3,7/4,7/5,7/6}
  \draw [-] (\to)--(\from);
\end{tikzpicture}
\end{center}
Denote by $\Delta$ its boundary complex. Its Stanley-Reisner ideal is
\[
I_{\Delta} = (ad,ae,af,be,bf,cf,bcv,cdv,dev).
\]
The Betti table of the Stanley-Reisner ring $K[\Delta]$ is
\[
\beta(K[\Delta])= \begin{array}{c|ccccc}
\beta_{i,j} & 0 & 1 & 2 & 3 & 4\\
\hline
0 & 1 & \cdot & \cdot & \cdot & \cdot\\
1 & \cdot & 6 & 8 & 3 & \cdot\\
2 & \cdot & 3 & 8 & 6 & \cdot\\
3 & \cdot & \cdot & \cdot & \cdot & 1
\end{array}
\]
It has the following Boij-S\"oderberg decomposition:
\begin{align*}
\beta(K[\Delta])= \; & 3\cdot4!\begin{array}{c|ccccc}
\beta_{i,j} & 0 & 1 & 2 & 3 & 4\\
\hline
0 & \frac{1}{168} & \cdot & \cdot & \cdot & \cdot\\
1 & \cdot & \frac{1}{20} & \frac{1}{12} & \frac{1}{24} & \cdot\\
2 & \cdot & \cdot & \cdot & \cdot & \cdot\\
3 & \cdot & \cdot & \cdot & \cdot & \frac{1}{420}
\end{array} +
2\cdot4!\begin{array}{c|ccccc}
\beta_{i,j} & 0 & 1 & 2 & 3 & 4\\
\hline
0 & \frac{1}{210} & \cdot & \cdot & \cdot & \cdot\\
1 & \cdot & \frac{1}{30} & \frac{1}{24} & \cdot & \cdot\\
2 & \cdot & \cdot & \cdot & \frac{1}{60} & \cdot\\
3 & \cdot & \cdot & \cdot & \cdot & \frac{1}{280}
\end{array} \\
& + 2\cdot4!\begin{array}{c|ccccc}
\beta_{i,j} & 0 & 1 & 2 & 3 & 4\\
\hline
0 & \frac{1}{280} & \cdot & \cdot & \cdot & \cdot\\
1 & \cdot & \frac{1}{60} & \cdot & \cdot & \cdot\\
2 & \cdot & \cdot & \frac{1}{24} & \frac{1}{30} & \cdot\\
3 & \cdot & \cdot & \cdot & \cdot & \frac{1}{210}
\end{array} +
3\cdot4!\begin{array}{c|ccccc}
\beta_{i,j} & 0 & 1 & 2 & 3 & 4\\
\hline
0 & \frac{1}{420} & \cdot & \cdot & \cdot & \cdot\\
1 & \cdot & \cdot & \cdot & \cdot & \cdot\\
2 & \cdot & \frac{1}{24} & \frac{1}{12} & \frac{1}{20} & \cdot\\
3 & \cdot & \cdot & \cdot & \cdot & \frac{1}{168}
\end{array}
\end{align*}
Notice how one can read off the dimension and number of stacked simplices in the polytope from the coefficients in the decomposition.
\end{eg}


\begin{thebibliography}{99}

\bibitem{AHH1} A.\ Aramova, J.\ Herzog, T.\ Hibi, {\em Shifting operations and graded Betti numbers}, J. Algebraic Combin. {\bf 12} (2000), no.\ 3, 207-–222.

\bibitem{AHH2} A.\ Aramova, J.\ Herzog, T.\ Hibi, {\em Ideals with stable Betti numbers}, Adv. Math. 152 (2000), no.\ 1, 72-–77.

\bibitem{B} C.\ B\"ohning, {\em Canonical surfaces $\PP^4$ with $p_g=p_a=5$ and $K^2=11$}, Atti Accad.\ Naz.\ Lincei Cl.\ Sci.\ Fis.\ Mat.\ Natur.\ Rend.\ Lincei (9) Mat. Appl.\ {\bf 18} (2007), 39–-57.


\bibitem{BH} W.\ Bruns, J.\ Herzog, {\em Cohen-Macaulay rings. Rev. ed.}, Cambridge Studies in Advanced Mathematics {\bf 39}, Cambridge University Press, Cambridge, 1998.

\bibitem{BS1} M.\ Boij, J.\ S\"oderberg, {\em  Graded Betti numbers of Cohen-Macaulay modules and the multiplicity conjecture}, J.\ London Math.\ Soc.\ {\bf 78} (2008), 85–-106.

\bibitem{BS2} M.\ Boij, J.\ S\"oderberg, {\em Betti numbers of graded modules and the multiplicity conjecture in the non-Cohen-Macaulay case}, Algebra Number Theory
(to appear); preprint available at  arXiv:0803.1645.

\bibitem{CN1}
    A.\ Corso, U.\ Nagel, {\em Monomial and toric ideals associated to Ferrers graphs}, Trans.\ Amer.\ Math.\ Soc.\ {\bf 361} (2009), 1371–-1395.

\bibitem{CN2}
    A.\ Corso, U.\ Nagel, {\em Specializations of Ferrers ideals}. J.\ Algebraic Combin.\ {\bf 28} (2008), 425–-437.

\bibitem{ES} D.\ Eisenbud, F.\ Schreyer, {\em Betti numbers of graded modules and cohomology of vector bundles}, J.\ Amer.\ Math.\ Soc.\ {\bf 22} (2009), 859–-888.

\bibitem{G}  M.\ Green, {\em Generic initial ideals}, in: {\em Six lectures on commutative algebra},  Progress in Mathematics
{\bf 166}, Birkh\"auser, 1998, pp.\ 119--185.

\bibitem{HH} J.\ Herzog, T.\ Hibi, {\em Monomial Ideals}, Graduate Texts in
    Mathematics {\bf 260}, Springer, 2011.

\bibitem{HSV} J.\ Herzog, L.\ Sharifan, M.\ Varbaro, {\em Graded Betti numbers of componentwise linear ideals}, Preprint, 2011; available at  arXiv:1111.0442.

\bibitem{MN}
    J.\ Migliore, U.\ Nagel, {\em Reduced arithmetically Gorenstein schemes and simplicial polytopes with maximal Betti numbers}, Adv.\ Math.\ {\bf 180} (2003), 1--63.

\bibitem{NR1}
    U.\ Nagel, V.\ Reiner, {\em Betti numbers of monomial ideals and shifted skew shapes}, Electron.\ J.\ Combin.\ {\bf 16} (2) (2009), Research Paper 3, 59 pp.

\bibitem{NR2}
    U.\ Nagel, T.\ R\"omer, {\em Criteria for componentwise linearity}, Preprint, 2011; available at  arXiv::1108.3921.

\bibitem{mod-liai}
    U.\ Nagel, {\em Liaison classes of modules}, J.\ Algebra {\bf 284} (2005), 236--272.

\end{thebibliography}
\end{document}